\newcounter{alphthm}
\newtheorem{thm}{Theorem}[section]
\newtheorem{lem}[thm]{Lemma}
\newtheorem{cor}{Corollary}[section]
\theoremstyle{definition}
\newcommand{\be}{\begin{equation}}
\newcommand{\ee}{\end{equation}}
\newcommand{\ben}{\begin{enumerate}}
\newcommand{\een}{\end{enumerate}}
\title{Two Families  of Finsler Metrics Projectively Related to a Kropina Metric}
\author{A. Tayebi,  H. Sadeghi and  E. Peyghan }
\begin{document}

\maketitle
\begin{abstract}
In this paper, we find the necessary and sufficient conditions under which two classes of $(q, \alpha, \beta)$-metrics   are projectively related  to a Kropina metric.\\\\
{\bf {Keywords}}:   Kropina metric, Matsumoto metric, $(q, \alpha, \beta)$-metric.\footnote{ 2010 Mathematics subject Classification: 53C60, 53C25.}
\end{abstract}

\section{Introduction}
Two regular metrics are called projectively related if there is a diffeomorphism between them such that the pull-back metric is pointwise projective to another one. In Riemannian geometry, two Riemannian metrics $\alpha$ and ${\bar \alpha}$ on a manifold $M$ are projectively related if and only if their spray coefficients have the relation $G^i_\alpha={\bar G}^i_{\bar \alpha}+P_0y^i$, where $P=P(x)$ is a scalar function on $M$ and $P_0:=P_{x^k}y^k$. In Finsler geometry, two Finsler metrics $F$ and  ${\bar F}$ on a  manifold $M$ are called projectively related if $G^i= {\bar G}^i+Py^i$, where   $G^i$ and ${\bar G}^i$ are the geodesic spray coefficients of $F$ and  ${\bar F}$,  respectively and  $P=P(x, y)$ is a scalar function on  the slit tangent bundle $TM_0$. In this case, any geodesic of the first is also geodesic for the second and vice versa.

In order to find explicit examples of projectively related Finsler metrics, we consider $(\alpha, \beta)$-metrics. An $(\alpha,\beta)$-metric is defined by $F:=\alpha\phi(s)$, $s=\beta/\alpha$ where  $\phi=\phi(s)$ is a $C^\infty$ scalar function on $(-b_0, b_0)$ with certain regularity, $\alpha=\sqrt{a_{ij}(x)y^iy^j}$ is a Riemannian metric and  $\beta=b_i(x)y^i$  is a 1-form on a manifold $M$. Thus a natural question arises:
\begin{center}
{\it Under which conditions, two $(\alpha,\beta)$-metrics are projectively related?}
\end{center}
The projective changes between two special $(\alpha,\beta)$-metrics have been studied by many geometers. For example,  Shen has been studied the projectively related Einstein-Finsler metrics \cite{Sh}. A Randers metric $F=\alpha+\beta$  on a manifold $M$ is just a Riemannian metric $\alpha=\sqrt{a_{ij}y^iy^j}$ perturbated by a one form $\beta=b_i(x)y^i$ on $M$ such that  $\|\beta\|_{\alpha}<1$ \cite{TP}.  Then Shen-Yu  studied projectively related Randers metrics \cite{SY}. By the same method,  Cui-Shen find  necessary and sufficient conditions under which the Berwald  metric  $F=\frac{(\alpha+\beta)^2}{\alpha}$ and a Randers metric ${\bar F}=\bar{\alpha}+\bar{\beta}$  are projectively related \cite{CuSh}. Later on,  Zohrevand-Rezaii  do the same for a Matsumoto metric  $F=\frac{\alpha^2}{\alpha-\beta}$ and a Randers metric \cite{ZR}.  Recently,  Chen-Cheng find  necessary and sufficient conditions under which  the metrics in the form $F=\frac{(\alpha+\beta)^p}{\alpha^{p-1}}$ are projectively related to a Randers metric  \cite{CC}. If we substitute $\beta$ with $-\beta$ and take $p = -1$, then we get the Matsumoto metric  which was introduced by Matsumoto as a realization of Finsler's idea  ``a slope measure of a mountain with respect to a time measure" \cite{Matsumoto}\cite{SS}\cite{TPS2}.

There is an important $(\alpha, \beta)$-metric, called Kropina metric $\bar{F}=\frac{\alpha^2}{\beta}$. Kropina metrics were first introduced by L. Berwald in connection with a two-dimensional Finsler space with
rectilinear extremal and were investigated by V. K. Kropina \cite{K}. In \cite{MC}, Mu-Cheng  get the conditions  that a Randers-Kropina metric $F=\alpha+\epsilon\beta+\kappa \alpha^2/\beta$ is projectively equivalent to a Kropina metric $F=\frac{\alpha^2}{\beta}$.  Then, the authors find necessary and sufficient conditions under which a family of Finsler metrics  in the form $F=\frac{\beta^p}{(\beta-\alpha)^{p-1}}$  $(p\neq  1, -1)$ are projectively related to a Randers metric ${\bar F}=\bar{\alpha}+\bar{\beta}$ \cite{TPS1}.

There exists a special subclass of $(\alpha,\beta)$-metrics, namely $(q, \alpha, \beta)$-metrics. Let $\phi:[-1, 1]\rightarrow \mathbb{R}$, $\phi(s)=(1+s)^q$,  $1\leq q\leq 2$ and $||\beta||_{\alpha}<1$. It is easy to see that
\begin{eqnarray*}
\phi'=q(1+s)^{q-1},  \ \ \ \phi''=q(q-1)(1+s)^{q-2}>0.
\end{eqnarray*}
Since $\phi(s)=(1+s)^q>0$, then $\phi-s\phi'=(1+s)^{q-1}[1+s(1-q)]>0, \ \ \  (|s|<1)$. Thus $F:=\alpha\phi(\frac{\beta}{\alpha})=\frac{(\alpha+\beta)^q}{\alpha^{q-1}}$ is a Finsler metric. We call it
$(q, \alpha, \beta)$-metric. When $q = 1$ or  $q = 2$, $F$ becomes Randers metric and Berwald metric, respectively. If we
substitute $\beta$ with $-\beta$ and take $q = -1$, the resulting metric is  Matsumoto
metric.

In this paper,  we are going to find the conditions under which on a manifold $M$ of dimension $n\geq 3$, the $(q, \alpha, \beta)$-metric $F=\frac{(\alpha+\beta)^q}{\alpha^{q-1}}$ and a Kropina  metric $\bar{F}=\frac{{\bar\alpha}^2}{{\bar\beta}}$  being  projectively related. More precisely, we prove the following.
\begin{thm}\label{mainthm1}
Let $F=\frac{(\alpha+\beta)^q}{\alpha^{q-1}}$ $(q\neq 1)$ be a $(q,\alpha,\beta)$-metric and $\bar{F}=\frac{{\bar\alpha}^2}{\bar{\beta}}$ be a Kropina metric on a $n$-dimensional
manifold $M$ $(n \geq3)$ where $\alpha$ and $\bar{\alpha}$ are two Riemannian metrics, $\beta$ and $\bar{\beta}$ are two non-zero collinear 1-forms. Then $F$ is projectively related to $\bar{F}$ if and only if they are Douglas metrics and the geodesic coefficients of $\alpha$ and $\bar{\alpha}$ have the following relation
\begin{eqnarray}
\nonumber G^i_{\alpha}-\bar{G}^i_{\bar{\alpha}}=\!\!\!\!&-&\!\!\!\!\! \frac{1}{2}\frac{q(q-1)\alpha^2 r_{00}}{(1-q^2)\beta^2+(2-q)\alpha\beta +[1+(q^2-q)b^2]\alpha^2} b^i\\\!\!\!\!&+&\!\!\!\!\! \frac{1}{2\bar{b}^2}(\bar{\alpha}^2\bar{s}^i+\bar{r}_{00}\bar{b}^i)+\theta y^i\label{DD11}
\end{eqnarray}
where $b^i:=a^{ij}b_j$, $\bar{b}^i:=\bar{a}^{ij}\bar{b}_j$, $\bar{b}^2:=\|\bar{\beta}\|_{\bar{\alpha}}$, $\tau:=\tau(x)$ is a scalar function and $\theta:=\theta_i y^i$ is a 1-form on $M$.
\end{thm}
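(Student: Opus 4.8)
The plan is to run the irrationality technique that has become standard for projectively related $(\alpha,\beta)$-metrics (cf. \cite{SY}, \cite{CuSh}, \cite{CC}, \cite{MC}), adapted to the conic nature of the Kropina metric. The starting point is the universal formula for the geodesic coefficients of an $(\alpha,\beta)$-metric $\alpha\phi(\beta/\alpha)$,
\[
G^i = G^i_\alpha + \alpha Q\, s^i_{\ 0} + \Psi\,(r_{00}-2\alpha Q s_0)\,b^i + \Theta\,\alpha^{-1}(r_{00}-2\alpha Q s_0)\,y^i,
\]
with $Q=\phi'/(\phi-s\phi')$, $\Psi=\phi''/\bigl(2[(\phi-s\phi')+(b^2-s^2)\phi'']\bigr)$, and $\Theta$ the analogous rational expression in $s$ and $b^2$. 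Evaluating these at $\phi(s)=(1+s)^q$ for $F$ gives
\[
Q=\frac{q\alpha}{\alpha+(1-q)\beta},\qquad
\Psi=\frac{q(q-1)\alpha^2}{2\Delta},\qquad
\Delta:=(1-q^2)\beta^2+(2-q)\alpha\beta+[1+q(q-1)b^2]\alpha^2,
\]
so the rational kernel appearing in \eqref{DD11} is already visible; and evaluating at $\bar\phi(s)=1/s$ for the Kropina metric $\bar F=\bar\alpha^2/\bar\beta$ gives the much simpler $\bar Q=-\bar\alpha/(2\bar\beta)$, $\bar\Psi=1/(2\bar b^2)$, $\bar\Theta=-\bar\beta/(\bar\alpha\bar b^2)$, so that $\bar G^i$ becomes explicit in $\bar G^i_{\bar\alpha},\bar\alpha,\bar\beta,\bar r_{ij},\bar s_{ij}$. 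Inserting both expansions into the projectivity identity $G^i=\bar G^i+Py^i$ and clearing the denominators $\alpha+(1-q)\beta$, $\bar\beta$, and $\Delta$ yields one identity whose two sides are polynomials in $y$ apart from the explicit occurrences of $\alpha=\sqrt{a_{ij}y^iy^j}$ and $\bar\alpha=\sqrt{\bar a_{ij}y^iy^j}$.

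For the ``only if'' direction I would split that identity into its $y$-rational part and its genuinely irrational part (separately in $\alpha$ and in $\bar\alpha$), using that $\alpha^2,\bar\alpha^2,\beta,\bar\beta$ are polynomials in $y$ while $\alpha,\bar\alpha$ are not, together with the collinearity $\bar b_i=c(x)b_i$. Vanishing of the irrational part forces algebraic constraints on $r_{00},s^i_{\ 0}$ and on $\bar r_{00},\bar s^i_{\ 0}$; comparing with the classification of Douglas $(\alpha,\beta)$-metrics, one recognises these exactly as the conditions that $b_{i|j}$ and $\bar b_{i|j}$ be of Douglas type, i.e. that $F$ and $\bar F$ are Douglas metrics. (It is here that $n\ge3$ is used, to license the divisibility and degree conclusions drawn from the polynomial identities.) Feeding these normal forms back, the $s_0$-contributions and every term proportional to $y^i$ — in particular $Py^i$ — are absorbed into a single $1$-form $\theta=\theta_iy^i$; the $F$-side then contributes to $G^i_\alpha-\bar G^i_{\bar\alpha}$ exactly $-\Psi r_{00}b^i=-\tfrac{1}{2}q(q-1)\alpha^2 r_{00}\Delta^{-1}b^i$ and the Kropina side contributes exactly $+\tfrac{1}{2\bar b^2}(\bar\alpha^2\bar s^i+\bar r_{00}\bar b^i)$, which is precisely \eqref{DD11}. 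The ``if'' direction is this computation in reverse: granting that $F$ and $\bar F$ are Douglas and that \eqref{DD11} holds, the Douglas-reduced spray formulas together with \eqref{DD11} give $G^i-\bar G^i=(\text{scalar})\,y^i$, so $F$ is projectively related to $\bar F$.

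The step I expect to be the main obstacle is the bookkeeping of the irrationality split in the presence of two \emph{different} background metrics $\alpha$ and $\bar\alpha$: one must argue that a term genuinely irrational in $\alpha$ cannot be cancelled by a term genuinely irrational in $\bar\alpha$, then organise the remaining polynomial identity by degree in $y$ and by the factors $\alpha+(1-q)\beta$, $\bar\beta$, $\Delta$, and extract from that \emph{single} identity both the Douglas normal forms of $b_{i|j}$, $\bar b_{i|j}$ and the exact coefficient appearing in \eqref{DD11}. A secondary subtlety is that the Kropina metric is only a conic (singular) Finsler metric, so the spray identities above hold a priori only on the cone $\bar\beta\neq0$, and one must verify that the polynomial consequences deduced there persist identically on the whole slit tangent bundle $TM_0$.
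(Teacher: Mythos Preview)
Your strategy for the converse is fine, and the rational kernel you identified as $\Psi r_{00}b^i$ is exactly what appears in \eqref{DD11}. But the forward direction as you describe it has a real gap, and the paper does not argue this way.

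The problem is the unknown $P$. In $G^i=\bar G^i+Py^i$ the scalar $P$ is an arbitrary positively $1$-homogeneous function; after clearing $\Delta$, $\alpha+(1-q)\beta$ and $\bar\beta$ you do \emph{not} get a polynomial identity in $y$, because $P$ multiplied by those denominators is still present and carries its own irrational contributions in both $\alpha$ and $\bar\alpha$. So ``vanishing of the irrational part'' is unavailable: at best the irrational part is proportional to $y^i$, and from that alone one cannot read off the Douglas normal forms for $b_{i|j}$ and $\bar b_{i|j}$. The paper avoids this by first passing to the Douglas tensor, which is a projective invariant and therefore eliminates $P$ entirely. Writing $T^i=\alpha Qs^i_{\ 0}+\Psi(r_{00}-2\alpha Qs_0)b^i$ and similarly $\bar T^i$, equality of Douglas tensors yields
\[
T^i-\bar T^i-\tfrac{1}{n+1}\bigl(T^m_{y^m}-\bar T^m_{y^m}\bigr)y^i=H^i_{00}
\]
with $H^i_{00}$ \emph{quadratic} in $y$. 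Now the divisibility/irrationality argument (using $\bar\beta=\mu\beta$ and $n\ge3$) is legitimate and shows $\bar b^2\bar s^i_{\ 0}-\bar b^i\bar s_0$ is divisible by $\bar\beta$; Matsumoto's criterion then says the Kropina metric $\bar F$ is Douglas, hence $F$ is too. A second external input you do not mention --- the Li--Shen--Shen lemma that a Douglas $(\alpha,\beta)$-metric with $Q/s$ nonconstant has $\beta$ closed --- gives $s_{ij}=0$ for $F$. Only \emph{after} these two reductions does the paper return to $G^i=\bar G^i+Py^i$; with $s_{ij}=0$ and the Kropina Douglas relation in hand, the coefficient of $y^i$ is visibly (quadratic right-hand side) a $1$-form $\theta$, and \eqref{DD11} follows. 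In short, you should route the argument through the Douglas tensor to kill $P$, and invoke the two cited lemmas rather than try to extract the Douglas conditions directly from the raw spray identity.
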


\bigskip

Let us define $\phi(s):=s(\frac{s}{s-1})^{q-1}$. By a simple calculation, we get $\phi-s\phi'>0$. Then  $F=\frac{\beta^q}{(\beta-\alpha)^{q-1}}$ is a Finsler metric. This metric is also a special $(q, \alpha, \beta)$-metric. If $q=2$, then $F=\frac{\beta^2}{\beta-\alpha}$  is called \emph{infinite series metric}. Indeed, Let us consider the r-th series $(\alpha,\beta)$-metric $F=\beta\sum_{k=0}^{r}(\frac{\alpha}{\beta})^k$, where we assume  $\alpha<\beta$. If $r=1$, then $F=\alpha+\beta$ is a Randers metric. If we put $r=\infty$, then  we get infinite series metric. We have not at all investigated the geometrical meaning about the infinite series metric by this time. But this metric is remarkable as the difference between a Randers metric  and a Matsumoto metric.

\begin{thm}\label{mainthm3}
Let $F=\frac{\beta^q}{(\beta-\alpha)^{q-1}}$ $(q\neq 1,-1)$   be a $(q,\alpha,\beta)$-metric and $\bar{F}=\frac{\bar{\alpha^2}}{\bar{\beta}}$ be a Kropina metric on a $n$-dimensional
manifold $M$ $(n \geq3)$ where $\alpha$ and $\bar{\alpha}$ are two Riemannian metrics, $\beta$ and $\bar{\beta}$ are two non-zero collinear 1-forms. Then $F$ is projectively related to $\bar{F}$ if and only if they are Douglas metrics and the geodesic coefficients of $\alpha$ and $\bar{\alpha}$ have the following relation
\begin{eqnarray}
G^i_{\alpha}-\bar{G}^i_{\bar{\alpha}}=\frac{1}{2\bar{b}^2}(\bar{\alpha}^2\bar{s}^i+\bar{r}_{00}\bar{b}^i)+\theta y^i-\frac{ q\alpha^3 r_{00}}{2\big[\beta^2(\beta-\alpha)+q(b^2\alpha^2-\beta^2)\alpha\big]} b^i,\label{p7}
\end{eqnarray}
where $b^i:=a^{ij}b_j$, $\bar{b}^i:=\bar{a}^{ij}\bar{b}_j$, $\bar{b}^2:=\|\bar{\beta}\|_{\bar{\alpha}}$,  $\tau:=\tau(x)$ is a scalar function and $\theta:=\theta_i y^i$ is a 1-form on $M$.
\end{thm}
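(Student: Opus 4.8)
\emph{Plan of proof.} The strategy is to imitate the scheme of Theorem~\ref{mainthm1}, replacing the profile function $\phi(s)=(1+s)^{q}$ by $\phi(s)=s^{q}/(s-1)^{q-1}$. First I would recall the formula for the geodesic coefficients of an $(\alpha,\beta)$-metric $F=\alpha\phi(\beta/\alpha)$,
\[
G^{i}=G^{i}_{\alpha}+\alpha Q\, s^{i}{}_{0}+\Theta\big(r_{00}-2\alpha Q s_{0}\big)\frac{y^{i}}{\alpha}+\Psi\big(r_{00}-2\alpha Q s_{0}\big)b^{i},
\]
where $Q=\phi'/(\phi-s\phi')$, $\Theta$ and $\Psi$ are the usual rational functions of $s$ and $b^{2}$, $r_{ij}$ and $s_{ij}$ are the symmetric and antisymmetric parts of $b_{i|j}$, and $s^{i}{}_{0}=a^{ij}s_{jk}y^{k}$, $s_{0}=b_{i}s^{i}{}_{0}$, $r_{00}=r_{ij}y^{i}y^{j}$. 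For the present $\phi$ a direct computation gives
\[
Q=\frac{s-q}{(q-1)s},\qquad \Psi=\frac{q}{2\big[s^{3}-(q+1)s^{2}+qb^{2}\big]}=\frac{q\alpha^{3}}{2\big[\beta^{2}(\beta-\alpha)+q(b^{2}\alpha^{2}-\beta^{2})\alpha\big]},
\]
so the denominator appearing in \eqref{p7} is already visible as the denominator of $\Psi$. I would likewise record the known expression for the geodesic coefficients $\bar{G}^{i}$ of the Kropina metric $\bar{F}=\bar\alpha^{2}/\bar\beta$ (the singular case $\bar\phi(\bar s)=1/\bar s$), which contributes the term $\frac{1}{2\bar b^{2}}(\bar\alpha^{2}\bar s^{i}+\bar r_{00}\bar b^{i})$.

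\emph{Necessity.} Assume $G^{i}=\bar{G}^{i}+Py^{i}$. Contracting with $y^{j}$ and antisymmetrizing in $i,j$ annihilates $Py^{i}$ and the $\Theta$-term, leaving the Douglas-type identity
\[
\big(G^{i}_{\alpha}y^{j}-G^{j}_{\alpha}y^{i}\big)+\alpha Q\big(s^{i}{}_{0}y^{j}-s^{j}{}_{0}y^{i}\big)+\Psi\big(r_{00}-2\alpha Q s_{0}\big)\big(b^{i}y^{j}-b^{j}y^{i}\big)=\text{(the same with bars)}.
\]
The heart of the argument is the algebraic analysis of this identity in the variable $y$: the two Riemannian terms $G^{i}_{\alpha}y^{j}-G^{j}_{\alpha}y^{i}$ and $\bar{G}^{i}_{\bar\alpha}y^{j}-\bar{G}^{j}_{\bar\alpha}y^{i}$ are cubic polynomials in $y$, whereas the remaining terms involve the irrationalities $\alpha$ (resp.\ $\bar\alpha$) both through $\alpha Q$ and through $\Psi$, a rational function of $s=\beta/\alpha$ and $b^{2}$. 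Using the hypothesis that $\beta$ and $\bar\beta$ are collinear, write $\bar b_{i}=\kappa(x)b_{i}$; then both sides are governed by the single $1$-form $\beta$, and separating the parts that are, and are not, polynomial in $y$ — treating $\alpha,\bar\alpha$ as algebraically independent of the $y^{i}$ — forces the non-polynomial part on \emph{each} side to vanish identically. This is exactly the statement that $F$ and $\bar{F}$ are Douglas metrics, and it simultaneously delivers the structural equations on $r_{ij},s_{ij}$ (resp.\ $\bar r_{ij},\bar s_{ij}$) that characterize the Douglas property for these families (for the $(q,\alpha,\beta)$-metric this is the analogue of what is obtained in the proof of Theorem~\ref{mainthm1}, involving a scalar function $\tau=\tau(x)$; for Kropina it is the known characterization putting $\bar s_{ij},\bar r_{ij}$ into the form above).

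Once both metrics are Douglas, the geodesic coefficients collapse: the surviving $b^{i}$-term of $G^{i}$ becomes $\Psi r_{00}b^{i}$ up to terms proportional to $y^{i}$, and $\bar{G}^{i}$ reduces to $\bar{G}^{i}_{\bar\alpha}+\frac{1}{2\bar b^{2}}(\bar\alpha^{2}\bar s^{i}+\bar r_{00}\bar b^{i})$ up to terms proportional to $y^{i}$. Substituting into $G^{i}=\bar{G}^{i}+Py^{i}$, absorbing all coefficients of $y^{i}$ into a $1$-form $\theta=\theta_{i}y^{i}$ on $M$, and inserting the value of $\Psi$ found above yields precisely \eqref{p7}. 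For \emph{sufficiency} I would run this backwards: assuming $F$ and $\bar{F}$ are Douglas and \eqref{p7} holds, substitute the Douglas-reduced forms of $G^{i}$ and $\bar{G}^{i}$ into $G^{i}-\bar{G}^{i}$; the $b^{i}$-, $\bar s^{i}$- and $\bar b^{i}$-terms cancel by \eqref{p7}, so $G^{i}-\bar{G}^{i}$ is a scalar multiple of $y^{i}$ and $F$ is projectively related to $\bar{F}$.

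The step I expect to be the main obstacle is the necessity argument that projective relatedness already forces both metrics to be Douglas: one must organize the identity above into its $y$-polynomial and genuinely irrational parts, use the collinearity $\bar b_{i}=\kappa b_{i}$ to couple the $\alpha$-side and $\bar\alpha$-side, and exclude cancellations between the $\alpha$- and $\bar\alpha$-irrationalities — this is where the restrictions $q\neq 1,-1$ and $n\geq 3$ enter. The rest is essentially computational: simplifying $\Theta$ and $\Psi$ for $\phi(s)=s^{q}/(s-1)^{q-1}$ and tracking the many terms linear in $y^{i}$ that get absorbed into $\theta y^{i}$.
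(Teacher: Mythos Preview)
Your overall strategy matches the paper's: compute $\Psi$ for $\phi(s)=s^{q}/(s-1)^{q-1}$, reduce to the Douglas case, and then read off \eqref{p7} by absorbing all $y^{i}$-proportional terms into $\theta y^{i}$. Your value of $\Psi$ is correct, and the sufficiency direction is fine and essentially identical to the paper's.

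The difference---and the place where your plan has a gap---is the necessity step in which you argue that projective equivalence already forces both metrics to be Douglas. You propose to antisymmetrize, obtaining $G^{i}y^{j}-G^{j}y^{i}=\bar G^{i}y^{j}-\bar G^{j}y^{i}$, and then to ``separate the parts that are, and are not, polynomial in $y$, treating $\alpha,\bar\alpha$ as algebraically independent of the $y^{i}$.'' But $\alpha$ and $\bar\alpha$ are functions of $y$, not independent indeterminates; the identity only tells you that the difference of the $\alpha$-irrational piece and the $\bar\alpha$-irrational piece is polynomial, not that each vanishes separately. You flag this as the main obstacle, but you do not give a mechanism for excluding such cancellations, and there is no soft ``independence'' argument that does it.

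The paper handles this step differently and more concretely. Rather than antisymmetrizing, it uses directly that projective equivalence implies equality of the Douglas tensors, which by \eqref{Q01} gives
\[
T^{i}-\bar T^{i}-\tfrac{1}{n+1}\big(T^{m}_{y^{m}}-\bar T^{m}_{y^{m}}\big)y^{i}=H^{i}_{00},
\]
with $H^{i}_{00}$ a quadratic polynomial in $y$. After clearing denominators this becomes a polynomial identity mixing $\alpha$ and $\bar\alpha$; replacing $y\mapsto -y$ and adding/subtracting separates the even and odd parts in $\alpha$. A further elimination (combining the two resulting equations) isolates a term of the form $(\bar b^{2}\bar s^{i}_{0}-\bar b^{i}\bar s_{0})\,\bar\alpha^{2}$ times a factor that is \emph{not} divisible by $\bar\beta$, while every other term manifestly is. Since $\bar\beta$ is prime with respect to $\alpha$ and $\bar\alpha$, this forces $\bar b^{2}\bar s^{i}_{0}-\bar b^{i}\bar s_{0}=\psi^{i}\bar\beta$, hence $\bar s_{ij}=\tfrac{1}{\bar b^{2}}(\bar b_{i}\bar s_{j}-\bar b_{j}\bar s_{i})$, i.e.\ the Kropina metric is Douglas by Lemma~\ref{lem1}. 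Since $F$ and $\bar F$ share a Douglas tensor, $F$ is then Douglas too; Lemma~\ref{lemma3} (this is where $q\neq\pm 1$ and $n\ge 3$ are used) gives $s_{ij}=0$, and the rest of your argument goes through verbatim. So the key idea you are missing is: prove only that the \emph{Kropina} side is Douglas, via a divisibility-by-$\bar\beta$ argument in the cleared-denominator identity, and then get the $F$-side for free from equality of Douglas tensors.
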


\section{Preliminary}
  An $(\alpha, \beta)$-metric is a Finsler metric on a manifold $M$ defined by $F:=\alpha\phi(s)$, where $s=\beta/\alpha$,  $\phi=\phi(s)$ is a $C^\infty$ function on the $(-b_0, b_0)$ with certain regularity, $\alpha=\sqrt{a_{ij}y^iy^j}$ is a Riemannian metric and $\beta=b_i(x)y^i$ is a 1-form on $M$.  For an $(\alpha, \beta)$-metric, let us define $b_{i|j}$ by $b_{i|j}\theta^j:=db_i-b_j\theta^j_i$, where $\theta^i:=dx^i$ and $\theta^j_i:=\Gamma^j_{ik}dx^k$ denote
the Levi-Civita connection form of $\alpha$. Let
\begin{eqnarray*}
r_{ij}:=\frac{1}{2}(b_{i|j}+b_{j|i}), \ \ \ s_{ij}:=\frac{1}{2}(b_{i|j}-b_{j|i}).
\end{eqnarray*}
Clearly, $\beta$ is closed if and only if $s_{ij}=0$. An $(\alpha,\beta)$-metric is said to be trivial if $r_{ij}=s_{ij}=0$. Put
\begin{eqnarray*}
&&r_{i0}: = r_{ij}y^j, \  \ r_{00}:=r_{ij}y^iy^j, \ \ r_j := b^i r_{ij},\\
&&s_{i0}:= s_{ij}y^j, \  \ \ s_j:=b^i s_{ij},\\
&& r_0:= r_j y^j,\ \  \ \  s_0 := s_j y^j.
\end{eqnarray*}
For  an $(\alpha,\beta)$-metric $F=\alpha\phi(s)$, $s=\frac{\beta}{\alpha}$, if we put
\[
Q:=\frac{\phi'}{\phi-s\phi'}
\]
then
\begin{eqnarray*}
&&Q'=\frac{\phi\phi''}{(\phi-s\phi')^2}, \ \ \ Q''=\frac{\phi'\phi''+\phi\phi'''}{(\phi-s\phi')^2}+\frac{2s\phi{\phi''}^2}{(\phi-s\phi')^3}.
\end{eqnarray*}
Now, let $\phi=\phi(s)$ be a positive $C^{\infty}$ function on
$(-b_0,b_0)$. For a number $b\in[0,b_0)$, let
\begin{eqnarray}
\Delta:=1+sQ+(b^2-s^2)Q'.\label{del1}
\end{eqnarray}
Let $G^i=G^i(x,y)$ and $\bar{G}^i_{\alpha}=\bar{G}^i_{\alpha}(x,y)$ denote the
coefficients  of $F$ and  $\alpha$ respectively in the same coordinate system. By definition, we have
\begin{eqnarray}
G^i=G^i_{\alpha}+\alpha Q s^i_0+(-2Q\alpha s_0+r_{00})(\Theta\frac{y^i}{\alpha}+\Psi b^i),\label{GG01}
\end{eqnarray}
where
\begin{eqnarray*}
&&\Theta:=\frac{Q-sQ'}{2\Delta}={\phi\phi' -s (\phi\phi'' +\phi'\phi')\over 2 \phi \Big[  ( \phi -s\phi')+( b^2 -s^2)\phi''  \Big ]}\\
&&\Psi:=\frac{Q'}{2\Delta}={1\over 2} { \phi'' \over  ( \phi -s \phi')+( b^2 -s^2)\phi''}.
\end{eqnarray*}
By (\ref{G1}), it follows that every trivial $(\alpha,\beta)$-metric satisfies $G^i=G^i_{\alpha}$ and then it reduces to a Berwald metric.

\section{Proof of Theorem \ref{mainthm1}}
For an $(q,\alpha,\beta)$-metric $F=\frac{(\alpha+\beta)^q}{\alpha^{q-1}}$,
the following are hold
\begin{eqnarray}
&&\nonumber Q=\frac{q}{s(1-q)+1},\\
&&\nonumber\Theta=\frac{1}{2}\frac{q(1-2(q-1)s)}{s^2(1-q^2)+s(2-q)+1+b^2q(q-1)} ,\\
&&\Psi:=\frac{1}{2}\frac{q(q-1)}{s^2(1-q^2)+s(2-q)+1+b^2q(q-1)}.\label{Q02}
\end{eqnarray}
For a Kropina metric $\bar{F}=\bar{\alpha}+\bar{\beta}$, we have
\begin{eqnarray}
&&\nonumber\bar{Q}:=-\frac{1}{2s},\\
&&\nonumber\bar{\Theta}:=-\frac{s}{2\bar{b}^2},\\
&&\bar{\Psi}:=\frac{1}{2\bar{b}^2}\label{Q03}.
\end{eqnarray}
The geodesic curves of a Finsler metric $F=F(x,y)$ on a smooth manifold $M$, are determined  by the system of second order differential equations
\[
\frac{d^2 x^i}{dt^2}+2G^i\big(x,\frac{dx}{dt}\big)=0,
\]
where the local functions $G^i=G^i(x, y)$ are called the  spray coefficients, and given by
\[
G^i=\frac{1}{4}g^{il}\Big\{\frac{\partial^2 F^2}{\partial x^k\partial y^l}y^k-\frac{\partial F^2}{\partial x^l}\Big\}.
\]
A Finsler metric $F$ is  called a Berwald metric, if  $G^i$  are quadratic in $y\in T_xM$  for any $x\in M$.

Let
\begin{eqnarray}
D^i_{j\ kl}:=\frac{\partial^3}{\partial y^j\partial y^k\partial
y^l}\big(G^i-\frac{1}{n+1}\frac{\partial G^m}{\partial
y^m}y^i\big).\label{Q1}
\end{eqnarray}
 It is easy to verify that
$\mathcal{D}:=D^i_{j\ kl}dx^j\otimes\partial_i\otimes dx^k\otimes
dx^l$ is a well-defined tensor on slit tangent bundle $TM_0$. We
call $\mathcal{D}$ the Douglas tensor. The Douglas tensor
$\mathcal{D}$ is a non-Riemannian projective invariant, namely, if
two Finsler metrics $F$ and $\bar{F}$ are projectively equivalent,
$G^i=\bar{G}^i+Py^i$, where $P=P(x, y)$ is positively
$y$-homogeneous of degree one, then the Douglas tensor of $F$ is
same as that of  $\bar{F}$.  Finsler metrics with vanishing
Douglas tensor are called Douglas metrics \cite{NST}\cite{NT}\cite{TP2}\cite{TP3}. The notion of Douglas metrics was first proposed by B$\acute{a}$cs$\acute{o}$-Matsumoto as a generalization  of Berwald metrics \cite{BM3}.

To prove Theorem \ref{mainthm1}, we remark the following.

\begin{lem}\label{lem1}{\rm \cite{MD}}
\emph{Let $F=\frac{\alpha^2}{\beta}$ is a Kropina metric on a n-dimensional manifold $M$. Then\\
(1) $(n\geq 3)$ Kropina metric $F$ with $(b^2\neq 0)$ is a Douglas metric if and only if
\begin{eqnarray}
\bar{s}_{ij}=\frac{1}{\bar{b}^2}(\bar{b}_i\bar{s}_j-\bar{b}_j\bar{s}_i);
\end{eqnarray}
 (2) $(n= 2)$ Kropina metric $F$ is a Douglas metric.}
\end{lem}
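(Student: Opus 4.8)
\section*{Proof proposal for Lemma \ref{lem1}}

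The plan is to treat the Kropina metric $F=\alpha^2/\beta$ as the $(\alpha,\beta)$-metric with $\phi(s)=1/s$, for which the quantities $Q,\Theta,\Psi$ take the values recorded in (\ref{Q03}), namely $Q=-\tfrac{1}{2s}$, $\Theta=-\tfrac{s}{2b^2}$, $\Psi=\tfrac{1}{2b^2}$, and to read off its spray from the universal formula (\ref{GG01}). Substituting these (with $s=\beta/\alpha$) into (\ref{GG01}), I expect to obtain
\[
G^i=G^i_\alpha-\frac{s_0}{2b^2}y^i-\frac{\beta r_{00}}{2b^2\alpha^2}y^i+\frac{r_{00}}{2b^2}b^i+\frac{\alpha^2}{2\beta}\Big(\frac{s_0}{b^2}b^i-s^i_0\Big).
\]
Here $G^i_\alpha$ and $\tfrac{r_{00}}{2b^2}b^i$ are quadratic in $y$, while $-\tfrac{s_0}{2b^2}y^i$ and $-\tfrac{\beta r_{00}}{2b^2\alpha^2}y^i$ are scalar multiples of $y^i$ (absorbable into the projective factor). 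Only the last term, which carries $\beta$ in the denominator, can obstruct the Douglas property, so the entire question reduces to deciding when $\tfrac{\alpha^2}{2\beta}\big(\tfrac{s_0}{b^2}b^i-s^i_0\big)$ is polynomial in $y$ modulo a multiple of $y^i$.

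For this I would invoke the standard reformulation of the Douglas condition coming from (\ref{Q1}): $F$ is a Douglas metric if and only if $G^i-\tfrac{1}{n+1}\tfrac{\partial G^m}{\partial y^m}y^i$ is quadratic in $y$, equivalently if and only if the functions $G^iy^j-G^jy^i$ are homogeneous polynomials of degree three in $y$. The advantage of the second form is that passing to $G^iy^j-G^jy^i$ annihilates every term proportional to $y^i$, so the quadratic and $y^i$-proportional pieces above drop out and I am left with the single requirement that
\[
\frac{\alpha^2}{2\beta}B^{ij},\qquad B^{ij}:=c^iy^j-c^jy^i,\quad c^i:=\frac{s_0}{b^2}b^i-s^i_0,
\]
be a cubic polynomial for all $i,j$, where $B^{ij}$ is already quadratic in $y$.

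The crux is the converse ("only if") implication. Since $\alpha^2=a_{ij}y^iy^j$ is an irreducible (positive definite) quadratic form and $\beta=b_iy^i$ is linear, the two are coprime as polynomials in $y$, so $\beta\mid\alpha^2B^{ij}$ forces $\beta\mid B^{ij}$. Writing $c^i=c^i_ky^k$, the divisibility $\beta\mid(c^iy^j-c^jy^i)$ for all $i,j$ says that on the hyperplane $\{\beta=0\}$ the vector $c^i$ is proportional to $y^i$; since $c^i$ is linear, this yields $c^i=\lambda y^i+\nu^i\beta$ with $\lambda=\lambda(x)$ and $\nu^i=\nu^i(x)$. Matching coefficients gives $s^i_k=\tfrac{s_kb^i}{b^2}-\lambda\delta^i_k-\nu^ib_k$; lowering the index $i$, imposing the antisymmetry $s_{ik}+s_{ki}=0$, and then contracting with $b^i$ and with $a^{ik}$ (using $s_kb^k=0$) forces $\lambda=0$ and $\nu_ib^i=0$, after which the same relation pins down $\nu_i=s_i/b^2$. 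This is exactly the asserted identity $s_{ij}=\tfrac{1}{b^2}(b_is_j-b_js_i)$. I expect this algebraic extraction, passing from the divisibility statement to the closed form of $s_{ij}$ using only antisymmetry, to be the main obstacle; everything preceding it is bookkeeping. The forward ("if") direction is then the quick check that under this relation $c^i=\tfrac{\beta}{b^2}s^i$ (with $s^i:=a^{ik}s_k$), so the offending term collapses to $\tfrac{\alpha^2}{2b^2}s^i$, which is quadratic, and the Douglas tensor vanishes.

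For part (2), when $n=2$ the space of $2$-forms is one dimensional, so $s_{ij}=f\,E_{ij}$ for the area form $E_{ij}=\sqrt{\det a}\,\epsilon_{ij}$ and a scalar $f$. The planar identity $b_i(b^kE_{kj})-b_j(b^kE_{ki})=b^2E_{ij}$ then gives $\tfrac{1}{b^2}(b_is_j-b_js_i)=f\,E_{ij}=s_{ij}$ identically, so the relation of part (1) holds automatically. Since the computation of the "if" direction is dimension free, this shows that every two-dimensional Kropina metric is a Douglas metric.
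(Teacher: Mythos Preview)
The paper does not prove this lemma; it is quoted verbatim from Matsumoto \cite{MD} and used as a black box. So there is no ``paper's own proof'' to compare against --- you have supplied an argument where the authors supply only a citation.

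Your argument is correct and follows the standard route for this result. You compute the spray of the Kropina metric from (\ref{GG01}), pass to the projectively invariant quantities $G^iy^j-G^jy^i$ to strip off all $y^i$-proportional terms, and are left with the single divisibility condition $\beta\mid \alpha^2(c^iy^j-c^jy^i)$ with $c^i=\tfrac{s_0}{b^2}b^i-s^i_{\ 0}$. Coprimality of $\alpha^2$ and $\beta$ reduces this to $\beta\mid(c^iy^j-c^jy^i)$, and your extraction of $c^i=\lambda y^i+\nu^i\beta$ followed by contraction with $b^i$ and $a^{ik}$ to kill $\lambda$ and identify $\nu_i=s_i/b^2$ is the right algebra. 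One point worth making explicit: the step ``on $\{\beta=0\}$ the vector $c^i$ is proportional to $y^i$, hence $c^i=\lambda y^i+\nu^i\beta$ with $\lambda$ a scalar'' is exactly where $n\ge3$ enters --- the hyperplane $\{\beta=0\}$ must have dimension at least two so that two independent coordinates $y^i,y^j$ survive there and the proportionality forces a constant ratio. You use this implicitly but do not flag it; since the statement separates the cases $n\ge3$ and $n=2$, it would strengthen the write-up to say so. Your handling of $n=2$ via the one-dimensionality of $\Lambda^2$ and the planar identity $b_i(b^kE_{kj})-b_j(b^kE_{ki})=b^2E_{ij}$ is clean and correct. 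A minor remark: the coefficients on the $y^i$ terms in your displayed formula for $G^i$ inherit the factor-of-two slip in the paper's value of $\bar\Theta$, but as you observe these terms are projectively irrelevant, so nothing in the proof depends on them.
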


\bigskip

For an $(\alpha,\beta)$-metric , the Douglas tensor is determined
by
\be
D^i_{j\ kl}:=\frac{\partial^3}{\partial y^j\partial y^k\partial y^l}\big(T^i-\frac{1}{n+1}\frac{\partial T^m}{\partial
y^m}y^i\big),\label{D1}
\ee
where
\be
T^i:= \alpha Q s^i_0+\Psi(r_{00}-2\alpha Q s_0)b^i\label{QA},
\ee
and
\be
T^m_{y^m}= Q's_0+\Psi'\alpha^{-1}(b^2-s^2)(r_{00}-2\alpha Q
s_0)+2\Psi\big[r_0-Q'(b^2-s^2)s_0-Qss_0\big]\label{Q3}.
\ee
Now, let $F$ and $\bar F$ be two $(\alpha,\beta)$-metrics which  have the same Douglas tensor, i.e., $D^i_{jkl}=\bar D^i_{jkl}$. From (\ref{Q1}) and (\ref{D1}), we have
\begin{eqnarray}
\frac{\partial^3}{\partial y^i\partial y^j\partial
y^k}\Big[T^i-{\bar T^i}-\frac{1}{n+1}(T^m_{y^m}-{\bar
T^m_{y^m}})y^i\Big]=0.\label{Q2}
\end{eqnarray}
Then there exists a class of scalar function
$H^i_{jk}:=H^i_{jk}(x)$ such that
\begin{eqnarray}
T^i-{\bar T^i}-\frac{1}{n+1}(T^m_{y^m}-{\bar
T^m_{y^m}})y^i=H^i_{00},\label{Q01}
\end{eqnarray}
where $H^i_{00}=H^i_{jk}(x)y^iy^j$, $T^i$ and $T^m_{y^m}$ are given by (\ref{QA}) and (\ref{Q3}) respectively. In this paper,  we assume that $\lambda:=\frac{1}{n+1}$.

\bigskip

\begin{lem}\label{lemma1}
Let $F=\frac{(\alpha+\beta)^q}{\alpha^{q-1}}$ $(q\neq 1)$  be a $(q,\alpha,\beta)$-metric and $\bar{F}=\frac{\bar{\alpha^2}}{\bar{\beta}}$ be a Kropina metric on a $n$-dimensional
manifold $M$ $(n \geq3)$,  where $\alpha$ and $\bar{\alpha}$ are two Riemannian metrics and $\beta$ and $\bar{\beta}$ are two non-zero collinear 1-forms. Then $F$ and $\bar{F}$ have the same Douglas tensor if and only if they are all Douglas metrics.
\end{lem}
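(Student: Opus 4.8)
The forward implication is trivial: a Douglas metric has vanishing Douglas tensor, so if $F$ and $\bar F$ are both Douglas metrics then their Douglas tensors coincide. For the converse I would argue as follows. Assuming $D^i_{jkl}=\bar D^i_{jkl}$, relation (\ref{Q01}) holds with scalar functions $H^i_{jk}(x)$, and the aim is to show that it forces the reduced quantities $T^i-\lambda T^m_{y^m}y^i$ and $\bar T^i-\lambda\bar T^m_{y^m}y^i$ to be quadratic in $y$, i.e. $\mathcal{D}_F=\mathcal{D}_{\bar F}=0$. Since $\mathcal{D}_F=\mathcal{D}_{\bar F}$, it is enough to prove $\mathcal{D}_F=0$.

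The first step is to substitute the explicit data into (\ref{Q01}). For $F$ one uses (\ref{Q02}) together with $Q=\tfrac{q\alpha}{\alpha+(1-q)\beta}$, $Q'=\tfrac{q(q-1)\alpha^2}{(\alpha+(1-q)\beta)^2}$ and $\Psi=\tfrac12\,\tfrac{q(q-1)\alpha^2}{D_\Psi}$, where $D_\Psi:=(1-q^2)\beta^2+(2-q)\alpha\beta+[1+(q^2-q)b^2]\alpha^2$ is exactly the denominator appearing in (\ref{DD11}); via (\ref{QA}) and (\ref{Q3}) this expresses $T^i$ and $T^m_{y^m}$ as rational functions of $y$ whose only irrationality is an overall dependence on $\alpha$. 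For $\bar F$ one uses (\ref{Q03}), observing the crucial simplification $\bar\alpha\bar Q=-\bar\alpha^2/(2\bar\beta)$, $\bar\Psi=1/(2\bar b^2)$ and $\bar\Psi'=0$: consequently $\bar T^i$ and $\bar T^m_{y^m}$, hence the entire $\bar F$-contribution to (\ref{Q01}), are \emph{rational} in $y$, with denominator a power of $\bar\beta$. I would then clear denominators in (\ref{Q01}) by multiplying through by an appropriate common factor — a polynomial in $\alpha,\beta,\bar\beta$ assembled from $\bar b^2$, powers of $\bar\beta$, $\alpha+(1-q)\beta$ and $D_\Psi$. Since $\alpha^2=a_{ij}y^iy^j$ is a polynomial in $y$ while $\alpha$ itself is not, and everything else ($\beta,\bar\beta,y^i,r_{00},s^i_0,\bar r_{00},\bar s^i_0,H^i_{00},\dots$) is polynomial in $y$, the resulting identity separates into its $\alpha$-free part and its ``coefficient-of-$\alpha$'' part, each a polynomial identity in $y$ holding identically.

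The heart of the matter is the analysis of these two polynomial identities. Here I would use $n\geq 3$ — so that at each point $y$ ranges over enough independent directions and the vectors $b^i,y^i,s^i_0,\bar s^i_0$ are not forced into special alignment — together with the collinearity hypothesis ($\bar b_i=\tau(x)b_i$, i.e. $\bar\beta=\tau\beta$) to cut down the number of independent unknowns. The $\alpha$-part contains a term coming from $\alpha Q\,s^i_0$ (an $\alpha^3 s^i_0$ contribution) that cannot be matched by the rational $\bar F$-side nor absorbed into the quadratic $H^i_{00}$ unless $s^i_0$ and $s_0$ enter only through the combinations that make $F$ a Douglas metric; comparing the $b^i$-components then pins down $r_{00}$ to the special form (a multiple of $\alpha^2$ modulo the relevant lower-order terms) demanded by the $(q,\alpha,\beta)$-Douglas condition. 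Together these force $T^i-\lambda T^m_{y^m}y^i$ to be quadratic in $y$, i.e. $\mathcal{D}_F=0$, whence $\mathcal{D}_{\bar F}=\mathcal{D}_F=0$ and both metrics are Douglas — in particular $\bar s_{ij}$ then satisfies the condition of Lemma~\ref{lem1}. I expect the main obstacle to be precisely this last step: controlling the bookkeeping of the polynomial identities with two distinct Riemannian metrics present, and excluding accidental cancellations — which is where $q\neq 1$, $n\geq 3$ and the collinearity of $\beta$ and $\bar\beta$ are all needed.
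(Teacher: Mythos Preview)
Your setup is correct through the point where you clear denominators and separate into even and odd powers of $\alpha$; this is exactly what the paper does (equations (\ref{Q06})--(\ref{Q09})). The divergence, and the gap, is in what you try to extract from those identities.

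You aim to read off the Douglas condition for $F$ directly, arguing that the ``$\alpha$-part contains a term coming from $\alpha Q s^i_0$ \dots\ that cannot be matched by the rational $\bar F$-side.'' That inference does not go through as stated. Once you multiply by the common denominator, the $\bar F$-contribution $(\bar A^i\bar\alpha^2+\bar B^i)$ is multiplied by the $F$-side denominator $I\alpha^5+J\alpha^4+\cdots+N$, which has both even and odd $\alpha$-powers. Hence $\bar F$-terms appear in \emph{both} the even-$\alpha$ and odd-$\alpha$ identities, and $\alpha$-irrationality alone does not isolate the $F$-side quantities $s^i_0$, $s_0$, $r_{00}$. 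Your sketch gives no mechanism to untangle this mixing, and the concluding step (``pins down $r_{00}$ to the special form \dots\ demanded by the $(q,\alpha,\beta)$-Douglas condition'') is left entirely unspecified.

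The paper's argument targets the other metric. From the two identities (\ref{Q08}), (\ref{Q09}) one sees that $(\bar A^i\bar\alpha^2+\bar B^i)$ times certain polynomials in $\alpha,\beta$ must be divisible by $\bar\beta$; using collinearity ($\beta=\mu\bar\beta$) and that $\bar\beta$ is prime to $\alpha$ and $\bar\alpha$, this forces $\bar A^i=\bar b^2\bar s^i_0-\bar b^i\bar s_0$ itself to be divisible by $\bar\beta$. Contracting with $\bar y_i$ gives $\bar s_{ij}=\tfrac{1}{\bar b^2}(\bar b_i\bar s_j-\bar b_j\bar s_i)$, which by Lemma~\ref{lem1} is exactly the Kropina Douglas criterion in dimension $n\ge 3$. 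So $\bar F$ is Douglas, and since $D_F=D_{\bar F}$, $F$ is Douglas as well. The key move is a divisibility argument by $\bar\beta$, not an $\alpha$-irrationality argument aimed at $F$; it succeeds because the Kropina Douglas condition is a single clean identity on $\bar s_{ij}$, whereas the Douglas condition for the $(q,\alpha,\beta)$-metric is considerably more involved.
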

\begin{proof}
The sufficiency is obvious. Suppose that $F$ and $\bar{F}$ have the same Douglas
tensor on an n-dimensional manifold $M$ when $n\geq 3$. Then ($\ref{Q01}$) holds. Plugging
(\ref{Q02}) and (\ref{Q03}) into (\ref{Q01}), we obtain
\begin{eqnarray}
\frac{A^i\alpha^6+B^i\alpha^5+C^i\alpha^4+D^i\alpha^3+E^i\alpha^2+F^i\alpha+H^i}{I\alpha^5+J\alpha^4+K\alpha^3+L\alpha^2+M\alpha+N}+\frac{\bar{A}^i\bar{\alpha}^2+\bar{B}^i}{2\bar{b}^2\bar{\beta}}=H^i_{00},\label{Q04}
\end{eqnarray}
where
\begin{eqnarray*}
A^i\!\!\!\!&:=&\!\!\!\!\! -2q(1-qb^2+q^2b^2)\Big[(1-qb^2+q^2b^2)s^i_0-q(q-1)s_0b^i\Big],\\
B^i\!\!\!\!&:=&\!\!\!\!\! q\Big[2(p-1)\lambda (1+qb^2)s_0y^i-2q(q-1)(q-2)\beta s_0b^i\\
&&+4\beta(q-2)(q^2b^2-qb^2+1)s^i_0+2\lambda(q-1)(q^2b^2-qb^2+1)r_0y^i\\
&&-(q-1)(q^2b^2-qb^2+1)r_{00}b^i\Big], \\
C^i\!\!\!\!&:=&\!\!\!\!\! q\Big[(q-1)(q^3 b^2-2q^2b^2+qb^2+2q-3)\beta r_{00}b^i+\lambda (q-1)(q-2)b^2r_{00}y^i\\
&&\ -2(q-1)\lambda(q^3 b^2-2q^2b^2+qb^2+2q-3)\beta r_{0}y^i\\
&&+\big[4qb^2(q+1)(q-1)^2+(2q^2+8q-12)\big]\beta^2s^i_0\\
&&-2\lambda (q-1)(3q^3b^2-2b^2q^2+2q-b^2q-3)\beta s_0y^i\\
&&-2q(q+1)(q-1)^2\beta^2s_0b^i\Big],\\
D^i\!\!\!\!&:=&\!\!\!\!\! q(q-1)\beta\Big[(2q^2-10q+6)\lambda\beta s_0y^i-4(q+1)(q-2)\beta^2s^i_0\\
&&-(q+4)(q-1)b^2r_{00}y^i+3(q-1)\beta r_{00}b^i\Big],\\
E^i\!\!\!\!&:=&\!\!\!\!\! -q(q-1)\beta^2\Big[2(q-1)(q+1)^2\beta^2s^i_0+\lambda \big[2b^2(q+1)(q-1)^2+(q-2)\big]r_{00}y^i\\
&&-2\lambda(q-1)(3q-1)(q+1)\beta s_0y^i-2\lambda (q+1)(q-1)^2r_0y^i\\
&&+(q+1)(q-1)^2\beta r_{00}b^i\Big],\\
F^i\!\!\!\!&:=&\!\!\!\!\! -\lambda q(q+4)(q-1)^2 \beta^3 r_{00}y^i,\\
H^i\!\!\!\!&:=&\!\!\!\!\! 2\lambda q(q+1)(p-1)^3 \beta^4r_{00}y^i.
\end{eqnarray*}
and
\begin{eqnarray*}
I\!\!\!\!&:=&\!\!\!\!\! -2(-qb^2+q^2b^2+1)^2,\\
J\!\!\!\!&:=&\!\!\!\!\! 2\beta(-qb^2+q^2b^2+1)(qb^2-2q^2b^2+q^3b^2+3q-5),\\
K\!\!\!\!&:=&\!\!\!\!\! 2\beta^2(-10-q^2+10q+6q^3b^2-12q^2b^2+6qb^2),\\
L\!\!\!\!&:=&\!\!\!\!\! -2(q-1)\beta^3(2q^4 b^2-2q^3b^2+3q^2-2q^2b^2+2q+2qb^2-10),\\
M\!\!\!\!&:=&\!\!\!\!\! 2\beta^4(q+1)(q-5)(q-1)^2,\\
N\!\!\!\!&:=&\!\!\!\!\! 2\beta^5(q+1)^2(q-1)^3.
\end{eqnarray*}
and
\begin{eqnarray*}
&&\bar{A}^i:=\bar{b}^2\bar{s}^i_0-\bar{b}^i\bar{s}_0,\\
&&\bar{B}^i:=\bar{\beta}[2\lambda y^i(\bar{r}_0+\bar{s}_0)-\bar{b}^i\bar{r}_{00}]
\end{eqnarray*}
(\ref{Q04}) is equivalent to following
\begin{eqnarray}
\nonumber 2\bar{b}^2\bar{\beta}(A^i\alpha^6\!\!\!\!&+&\!\!\!\!\! B^i\alpha^5+C^i\alpha^4+D^i\alpha^3+E^i\alpha^2+F^i\alpha+H^i)
\\ \nonumber
\!\!\!\!&+&\!\!\!\!\!(\bar{A}^i\bar{\alpha}^2+\bar{B}^i)(I\alpha^5+J\alpha^4+K\alpha^3+L\alpha^2+M\alpha+N)
\\
\!\!\!\!&=&\!\!\!\!\!\ \ 2\bar{b}^2\bar{\beta}(I\alpha^5+J\alpha^4+K\alpha^3+L\alpha^2+M\alpha+N)H^i_{00}.\label{Q06}
\end{eqnarray}
First we show that $\bar{A}^i$ can be divide by $\bar{\beta}$.

By replacing $y^i$ with $-y^i$ in (\ref{Q06}), we get the following
\begin{eqnarray}
\nonumber-2\bar{b}^2\bar{\beta}(-A^i\alpha^6\!\!\!\!&+&\!\!\!\!\! B^i\alpha^5-C^i\alpha^4+D^i\alpha^3-E^i\alpha^2+F^i\alpha-H^i)\\
\nonumber\!\!\!\!&-&\!\!\!\!\! (\bar{A}^i\bar{\alpha}^2+\bar{B}^i)(I\alpha^5-J\alpha^4+K\alpha^3-L\alpha^2+M\alpha-N)\\
\!\!\!\!&=&\!\!\!\!\!\ \ -2\bar{b}^2\bar{\beta}(I\alpha^5-J\alpha^4+K\alpha^3-L\alpha^2+M\alpha-N)H^i_{00}.\label{Q07}
\end{eqnarray}
 $(\ref{Q06})+(\ref{Q07})$ yields
\begin{eqnarray}
\nonumber 2\bar{b}^2\bar{\beta}(A^i\alpha^6+C^i\alpha^4+ E^i\alpha^2+H^i)\!\!\!\!&+&\!\!\!\!\!(\bar{A}^i\bar{\alpha}^2+\bar{B}^i)
(J\alpha^4+L\alpha^2+N)\\\!\!\!\!&=&\!\!\!\!\!  \ 2\bar{b}^2\bar{\beta}(J\alpha^4+L\alpha^2+N)H^i_{00}.\label{Q08}
\end{eqnarray}
$(\ref{Q06})-(\ref{Q07})$ implies that
\begin{eqnarray}
\nonumber(B^i\alpha^4+D^i\alpha^2+F^i)(2\bar{b}^2\bar{\beta})\!\!\!\!&+&\!\!\!\!\! (\bar{A}^i\bar{\alpha}^2+\bar{B}^i)(I\alpha^4+K\alpha^2+M)\\
\!\!\!\!&=&\!\!\!\!\! \ 2\bar{b}^2\bar{\beta}(I\alpha^4+K\alpha^2+M)H^i_{00}.\label{Q09}
\end{eqnarray}
If $q=-1$ then $H^i=N=M=0$. Thus (\ref{Q08}) and (\ref{Q09}) are equivalent to
\begin{eqnarray}
2\bar{b}^2\bar{\beta}(A^i\alpha^4+C^i\alpha^2+E^i)+(\bar{A}^i\bar{\alpha}^2+\bar{B}^i)(J\alpha^2+L)=2\bar{b}^2\bar{\beta}(J\alpha^2+L)H^i_{00}\label{QQ}
\end{eqnarray}
and
\begin{eqnarray}
2\bar{b}^2\bar{\beta}(B^i\alpha^4+D^i\alpha^2+F^i)+(\bar{A}^i\bar{\alpha}^2+\bar{B}^i)(I\alpha^4+K\alpha^2)
=2\bar{b}^2\bar{\beta}(I\alpha^4+K\alpha^2)H^i_{00}.\label{QQ0}
\end{eqnarray}
By (\ref{QQ}) and (\ref{QQ0}), it results that $(\bar{A}^i\bar{\alpha}^2+\bar{B}^i)(J\alpha^2+L)$ and $(\bar{A}^i\bar{\alpha}^2+\bar{B}^i)(I\alpha^4+K\alpha^2)$ can be divided by $\bar{\beta}$. Thus $\beta=\mu\bar{\beta}$ and $\bar{A}^i\bar{\alpha}^2 I \alpha^4$ can be divided by $\bar{\beta}$. Since $\bar{\beta}$ is prime with respect to $\alpha$ and $\bar{\alpha}$, therefore $\bar{A}^i:=\bar{b}^2\bar{s}^i_0-\bar{b}^i\bar{s}_0$ can be divided by $\bar{\beta}$. If $q\neq 1,-1$,  then (\ref{Q08}) and (\ref{Q09}) implies that $(\bar{A}^i\bar{\alpha}^2+\bar{B}^i)(J\alpha^4+L\alpha^2+N)$ and $(\bar{A}^i\bar{\alpha}^2+\bar{B}^i)(I\alpha^4+K\alpha^2+M)$ can be divided by $\bar{\beta}$. Since $\bar{\beta}$ is prime with respect to $\alpha$ and $\bar{\alpha}$, then $\bar{A}^i:=\bar{b}^2\bar{s}^i_0-\bar{b}^i\bar{s}_0$ can be divided by $\bar{\beta}$. Hence, there is a scaler function $\psi^i(x)$ such that
\begin{eqnarray}
\bar{b}^2\bar{s}^i_0-\bar{b}^i\bar{s}_0=\psi^i\bar{\beta}.\label{QQ2}
\end{eqnarray}
Contracting (\ref{QQ2}) with $\bar{y}_i:=\bar{a}_{ij}y^j$ yields
\[
\psi^i(x)=-\bar{s}^i.
\]
Then we have
\begin{eqnarray}
\bar{s}_{ij}=\frac{1}{\bar{b}^2}(\bar{b}_i\bar{s}_j-\bar{b}_j\bar{s}_i).\label{O1}
\end{eqnarray}
Now, suppose that $(n\geq 3)$. Then by Lemma \ref{lem1}, $\bar{F} =\frac{\bar{\alpha}^2}{\bar{\beta}}$ is a Douglas metric. Since $F$ and $\bar{F}$ have the same Douglas tensor, then both of them are Douglas metrics.

If $(n=2)$, then $\bar{F}=\frac{\bar{\alpha}^2}{\bar{\beta}}$ is a Douglas metric by Lemma \ref{lem1}. Thus $F$ and $\bar{F}$ having  the same Douglas tensors. This means that they are all Douglas metrics. This completes the
proof of lemma $\ref{lemma1}$.
\end{proof}

\bigskip

On the other hand, the following holds.
\begin{lem}\label{lemma3}{\rm \cite{LSS}}
\emph{Suppose that $\frac{Q}{s}\neq constant$ for an $(\alpha,\beta)-$ metric $F=\phi(\frac{\beta}{\alpha})$ on a manifold $M$ of dimension $n$ $(n\geq 3)$. If $F$ is a Douglas metric and $b:=\|\beta_x\|_{\alpha}\neq 0$, then $\beta$ is closed.}
\end{lem}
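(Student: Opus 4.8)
The plan is to turn the Douglas condition into the algebraic identity recorded in (\ref{D1})--(\ref{Q3}) and then to separate it according to the powers of $\alpha$. Since $\alpha$ is Riemannian, $G^i_\alpha$ is quadratic in $y$, so by (\ref{GG01}) the spray of $F$ equals $G^i_\alpha+T^i$ up to the term $\Theta\alpha^{-1}(r_{00}-2\alpha Qs_0)y^i$, which is a positively $1$-homogeneous scalar times $y^i$, hence a projective change. As the Douglas tensor is a projective invariant and vanishes on the quadratic spray $G^i_\alpha$, it follows from (\ref{Q1}) and (\ref{D1}) that $F$ is a Douglas metric if and only if $T^i-\lambda T^m_{y^m}y^i$ is a quadratic polynomial in $y$; equivalently, there are local functions $H^i_{jk}=H^i_{jk}(x)$ with $T^i-\lambda T^m_{y^m}y^i=H^i_{00}$, where $H^i_{00}:=H^i_{jk}y^jy^k$, $\lambda=\tfrac1{n+1}$, and $T^i$, $T^m_{y^m}$ are given by (\ref{QA}), (\ref{Q3}).

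I would then substitute (\ref{QA}) and (\ref{Q3}), recalling that $Q$, $\Psi$, $\Theta$ and their derivatives are functions of $s=\beta/\alpha$ and $b^2$, and that $\Psi=Q'/(2\Delta)$ with $\Delta=1+sQ+(b^2-s^2)Q'$ from (\ref{del1}) nowhere vanishing by the regularity of $F$. Multiplying through by $\alpha$ clears the factor $\alpha^{-1}$ appearing in (\ref{Q3}); then, writing $s=\beta/\alpha$ and clearing $\Delta$, the identity becomes one in which $\alpha$ enters only through $\alpha^2=a_{ij}y^iy^j$, together with an overall odd factor of $\alpha$ on part of the expression. Since $a_{ij}y^iy^j$ is not the square of a linear form, the ``$\alpha$-even'' and ``$\alpha$-odd'' pieces must vanish separately, and the odd piece collects precisely the terms linear in the antisymmetric data $s^i_0$, $s_0$, $r_0$ of $\beta$, weighted by the $Q$- and $\Psi$-factors. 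Comparing those $\phi$-weighted coefficients --- using $n\geq3$ to vary $y$ so that $\alpha(y)$, $\beta(y)$ and the component of $y$ transverse to $\beta_x$ move independently, and $b=\|\beta_x\|_\alpha\neq0$ to keep $b^i$ genuinely present --- forces each coefficient to vanish. Here the hypothesis $Q/s\neq\mathrm{const}$ is invoked: were $Q/s$ constant, then $\phi=c_1\sqrt{1+c_2s^2}$ and $F$ would be Riemannian, so that the splitting $F=\alpha\phi(\beta/\alpha)$ --- and with it $\beta$ --- carries no intrinsic information; ruling this out, the $\phi$-weights cannot all be annihilated, so the vanishing of the coefficients must instead force $s_0=0$ and $s^i_0=0$ identically in $y$. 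From $s^i_0=s^i_jy^j\equiv0$ we obtain $s^i_j=0$, i.e.\ $s_{ij}=0$, and so $\beta$ is closed.

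The main obstacle I anticipate is the bookkeeping in the middle step: arranging the long list of relations produced by the $\alpha$-odd part so that one genuinely reads off $s_{ij}=0$ rather than only a weaker linear relation among $s^i_0$, $s_0$ and $r_0$, and disposing of the degenerate profiles of $\phi$ for which this comparison would otherwise be inconclusive --- it is here that the three hypotheses $n\geq3$, $b\neq0$ and $Q/s\neq\mathrm{const}$ are all needed together. Alternatively, the statement can be deduced from the classification of Douglas $(\alpha,\beta)$-metrics in \cite{BM3}.
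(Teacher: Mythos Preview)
The paper does not prove this lemma at all: it is stated with a citation to \cite{LSS} and used as a black box. So there is no ``paper's own proof'' to compare against; your proposal is being measured against a result the authors simply import.

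As for the sketch itself, the overall architecture is the right one and matches what is done in \cite{LSS}: translate the Douglas condition into the polynomial identity $T^i-\lambda T^m_{y^m}y^i=H^i_{00}$, clear denominators, and exploit the fact that $\alpha$ is irrational in $y$ to split into rational and irrational parts. However, the heart of the argument --- the step you flag yourself as ``the main obstacle'' --- is not just bookkeeping, and your description of it is inaccurate in a way that matters. The odd-in-$\alpha$ part does \emph{not} neatly collect ``precisely the terms linear in the antisymmetric data $s^i_0$, $s_0$, $r_0$'': note that $r_0$ comes from the symmetric part of $b_{i|j}$, and both $r_{00}$ and $r_0$ appear in the irrational piece as well, entangled with $s_0$ and $s^i_0$ through the $\Psi$, $\Psi'$ factors. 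The actual proof in \cite{LSS} proceeds by a more delicate elimination: one first shows that the resulting polynomial identity (after clearing $\Delta^2$) forces a relation of the form $s^i_0=\eta(b^i\beta-b^2 y^i)$ plus terms, and then uses the hypothesis $Q/s\neq\mathrm{const}$ to kill the remaining freedom. Your appeal to $n\geq 3$ to ``vary $y$ so that $\alpha(y)$, $\beta(y)$ and the transverse component move independently'' is the right intuition, but it does not by itself yield $s^i_0=0$ without the intermediate structural step. Your alternative suggestion --- deduce it from the Douglas classification in \cite{BM3} --- is also not quite right: \cite{BM3} treats specific families rather than giving a general criterion of the form needed here; the general statement is precisely the contribution of \cite{LSS}.
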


\bigskip

Now, we are in the position to prove Theorem $\ref{mainthm1}$.

\bigskip

\noindent
{\bf Proof of Theorem \ref{mainthm1}:} We prove the theorem in two cases, as follows.\\\\
\textbf{Case (1):} When $q=-1$.\\
First we proof the necessity. If $F$ is projectively equivalent
to $\bar{F}$, then  they have the same Douglas tensor. By Lemma $\ref{lemma1}$, $F$ and $\bar{F}$
are both Douglas metrics.  $F=\frac{\alpha^2}{\alpha+\beta}$
is a Douglas metric if and only if $b_{i|j}=0$. Thus by ($\ref{G1}$), we have
\begin{eqnarray}
G^i=G^i_{\alpha}.\label{G0}
\end{eqnarray}
On the other hand, plugging (\ref{O1}) and (\ref{Q03}) into ($\ref{GG01}$) yields
\begin{eqnarray}
\bar{G}^i=\bar{G}^i_{\bar{\alpha}}-\frac{1}{2\bar{b}^2}\big[-\bar{\alpha}^2\bar{s}^i
+(2\bar{s}_0y^i-\bar{r}_{00}\bar{b}^i)+2\frac{\bar{r}_{00}\bar{\beta}y^i}{\bar{\alpha}^2}\big].\label{G11}
\end{eqnarray}
By the projective equivalence of $F$ and $\bar{F}$ again, there is a scalar function $P = P(x,y)$
on $TM_0$ such that $G^i=\bar{G}^i+Py^i$. From (\ref{G0}) and  (\ref{G11}) we have
\begin{eqnarray}
\big[P-\frac{1}{\bar{b}^2}(\bar{s}_0+\frac{\bar{r}_{00}\bar{\beta}}{\bar{\alpha}^2})\big]y^i=G^i_{\alpha}-\bar{G}^i_{\bar{\alpha}}
-\frac{1}{2\bar{b}^2}(\bar{\alpha}^2\bar{s}^i+\bar{r}_{00}\bar{b}^i).\label{DD12}
\end{eqnarray}
Note that the right side of (\ref{DD12}) is a quadratic in $y$. Then there exists a 1-form
$\theta=\theta_i(x)y^i$ on $M$ such that
\begin{eqnarray}
P-\frac{1}{\bar{b}^2}(\bar{s}_0+\frac{\bar{r}_{00}\bar{\beta}}{\bar{\alpha}^2})=\theta.
\end{eqnarray}
Thus we have
\begin{eqnarray}
&&G^i_{\alpha}=\bar{G}^i_{\bar{\alpha}}+\frac{1}{2\bar{b}^2}(\bar{\alpha}^2\bar{s}^i+\bar{r}_{00}\bar{b}^i)+\theta y^i.\label{D11}
\end{eqnarray}
This completes the proof of the necessity.

Conversely, because of $r_{00}=0$ and  from (\ref{G0}), (\ref{G11}) and (\ref{DD11}) we have
\begin{eqnarray}
G^i=\bar{G}^i+\big[\theta+\frac{1}{2\bar{b}^2}(\bar{s}_0+\frac{\bar{r}_{00}\bar{\beta}}{\bar{\alpha}^2})\big]y^i.
\end{eqnarray}
Thus $F$ is projectively equivalent to $\bar{F}$.\\\\
\textbf{Case (2):} When $q\neq 1,-1$.\\
First we proof the necessity. If $F$ is projectively equivalent
to $\bar{F}$ they have the same Douglas tensor. By Lemma $\ref{lemma1}$, we know that $F$ and $\bar{F}$
are both Douglas metrics. If $q=1,-1$, then it is easy to prove that $\phi(s)=(1+s)^q$ satisfies $\frac{Q}{S}\neq constant$. By lemma $\ref{lemma3}$, we have $s_{ij}=0$. By ($\ref{GG01}$), it follows that
\begin{eqnarray}
&&\nonumber G^i=G^i_{\alpha}+\frac{1}{2}\frac{q(\alpha-2(q-1)\beta)r_{00}}{(1-q^2)\beta^2+(2-q)\alpha\beta +(1+q(q-1)b^2)\alpha^2} y^i\\
&&+\frac{1}{2}\frac{q(q-1)\alpha^2 r_{00}}{(1-q^2)\beta^2+(2-q)\alpha\beta +(1+q(q-1)b^2)\alpha^2} b^i.\label{GGG1}
\end{eqnarray}
On the other hand, plugging (\ref{O1}) and (\ref{Q03}) into ($\ref{GG01}$) yields
\begin{eqnarray}
\bar{G}^i=\bar{G}^i_{\bar{\alpha}}-\frac{1}{2\bar{b}^2}\big[-\bar{\alpha}^2\bar{s}^i+(2\bar{s}_0y^i
-\bar{r}_{00}\bar{b}^i)+2\frac{\bar{r}_{00}\bar{\beta}y^i}{\bar{\alpha}^2}\big].\label{GG11}
\end{eqnarray}
By the projective equivalence of $F$ and $\bar{F}$ again, there is a scalar function $P = P(x,y)$
on $TM_0$ such that $G^i=\bar{G}^i+Py^i$. By (\ref{GGG1}) and (\ref{GG11}), we have
\begin{eqnarray}
\nonumber\bar{G}^i_{\bar{\alpha}}\!\!\!\!&-&\!\!\!\!\!G^i_{\alpha}+\big[P- \frac{1}{2\bar{b}^2}(\bar{s}_0+\bar{r}_{00}\bar{b}^i)-\frac{1}{2}\frac{q[\alpha-2(q-1)\beta]r_{00}}
{(1-q^2)\beta^2+(2-q)\alpha\beta +[1+(q^2-q)b^2]\alpha^2}\big]y^i\\
\!\!\!\!&=&\!\!\!\!\!\frac{1}{2}\frac{(q^2-q)\alpha^2 r_{00}}{(1-q^2)\beta^2+(2-q)\alpha\beta +[1+q(q-1)b^2]\alpha^2} b^i-\frac{1}{2\bar{b}^2}(\bar{\alpha}^2\bar{s}^i+\bar{r}_{00}\bar{b}^i).\label{D12}
\end{eqnarray}
Note that the right side of (\ref{D12}) is a quadratic in $y$. Then there exists a 1-form
$\theta=\theta_i(x)y^i$ on $M$ such that
\begin{eqnarray}
P-\frac{1}{2\bar{b}^2}(\bar{s}_0+\bar{r}_{00}\bar{b}^i)-\frac{1}{2}\frac{q(\alpha-2(q-1)\beta)r_{00}}{(1-q^2)\beta^2+
(2-q)\alpha\beta +(1+q(q-1)b^2)\alpha^2}=\theta.
\end{eqnarray}
Thus we get
\begin{eqnarray}
\nonumber G^i_{\alpha}\!\!\!\!&+&\!\!\!\!\!\ \frac{1}{2}\frac{q(q-1)\alpha^2 r_{00}}{(1-q^2)\beta^2+(2-q)\alpha\beta +(1+q(q-1)b^2)\alpha^2} b^i=\bar{G}^i_{\bar{\alpha}}\\
\!\!\!\!&+&\!\!\!\!\!\ \frac{1}{2\bar{b}^2}(\bar{\alpha}^2\bar{s}^i+\bar{r}_{00}\bar{b}^i)+\theta y^i.\label{D11}
\end{eqnarray}
This completes the proof of the necessity.

Conversely, by  (\ref{DD11}), (\ref{G0}) and (\ref{G11})  we have
\begin{eqnarray*}
G^i-\bar{G}^i=\big[\theta+\frac{1}{2\bar{b}^2}(\bar{s}_0+\bar{r}_{00}\bar{b}^i)
+\frac{1}{2}\frac{q[\alpha-2(q-1)\beta]r_{00}}{(1-q^2)\beta^2+(2-q)\alpha\beta +[1+(q^2-q)b^2]\alpha^2}\big]y^i.
\end{eqnarray*}
Thus $F$ is projectively equivalent to $\bar{F}$. This completes the proof.
\qed

\bigskip

By Lemma \ref{lem1}, Lemma \ref{lemma3} and Theorem \ref{mainthm1}, we have the following.
\begin{cor}\label{cor2}
Let $F=\frac{(\alpha+\beta)^q}{\alpha^{q-1}}$ $(q\neq 1)$ be a $(q,\alpha,\beta)$ metric and $\bar{F}=\frac{\bar{\alpha^2}}{\bar{\beta}}$  be a Kropina metric on a $n-$ dimensional
manifold $M$ $(n \geq3)$ where $\alpha$ and $\bar{\alpha}$ are two Riemannian metrics, $\beta$ and $\bar{\beta}$ are two nonzero collinear 1-forms. Then $F$ is projectively equivalent to $\bar{F}$ if and only if .\\
\begin{eqnarray*}
&& G^i_{\alpha}+\frac{1}{2}\frac{q(q-1)\alpha^2 r_{00}}{(1-q^2)\beta^2+(2-q)\alpha\beta +[1+q(q-1)b^2]\alpha^2} b^i=\bar{G}^i_{\bar{\alpha}}+\frac{1}{2\bar{b}^2}(\bar{\alpha}^2\bar{s}^i+\bar{r}_{00}\bar{b}^i)+\theta y^i,\\
&&s_{ij}=0,\\
&&\bar{s}_{ij}:=\frac{1}{\bar{b}^2}\{\bar{b}_i\bar{s}_j-\bar{b}_j\bar{s}_i\}.
\end{eqnarray*}
where $b_{i|j}$ denote the coefficients of the covariant derivatives of $\beta$ with respect to $\alpha$.
\end{cor}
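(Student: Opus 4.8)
\noindent\textbf{Proof plan for Corollary \ref{cor2}.} The plan is to derive the corollary from Theorem \ref{mainthm1} by converting its two hypotheses ``$F$ and $\bar F$ are Douglas metrics'' into the pointwise conditions $s_{ij}=0$ and $\bar s_{ij}=\frac1{\bar b^2}(\bar b_i\bar s_j-\bar b_j\bar s_i)$, and by rewriting the identity (\ref{DD11}) as the first displayed equation of the corollary.

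For the necessity, suppose $F$ is projectively equivalent to $\bar F$. The data $M,\alpha,\bar\alpha,\beta,\bar\beta$ satisfy the hypotheses of Theorem \ref{mainthm1}, so $F$ and $\bar F$ are both Douglas metrics and (\ref{DD11}) holds; transposing the $b^i$-term of (\ref{DD11}) to the left-hand side then gives the first equation of the corollary, with the same $1$-form $\theta$. Since $\bar\beta$ is nonzero (as it must be for the Kropina metric $\bar\alpha^2/\bar\beta$), we have $\bar b^2\neq0$, so Lemma \ref{lem1}(1), applicable because $n\geq3$, applied to the Douglas metric $\bar F$ yields $\bar s_{ij}=\frac1{\bar b^2}(\bar b_i\bar s_j-\bar b_j\bar s_i)$, the third condition. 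For $F$, I would substitute $\phi(s)=(1+s)^q$ into $Q=\phi'/(\phi-s\phi')$ to get $Q=\frac{q}{1+(1-q)s}$, so that $Q/s=\frac{q}{s+(1-q)s^2}$ is nonconstant (for $q\neq0,1$ the denominator is a nonconstant polynomial in $s$); since $F$ is a Douglas metric and $b=\|\beta_x\|_\alpha\neq0$ away from a nowhere-dense set (as $\beta\not\equiv0$), Lemma \ref{lemma3} forces $\beta$ to be closed there, so $s_{ij}=0$ on $M$ by continuity. This is the second condition.

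Conversely, given the three conditions, I would argue directly on spray coefficients. The third condition is (\ref{O1}), so substituting it together with (\ref{Q03}) into (\ref{GG01}) produces the formula (\ref{GG11}) for $\bar G^i$ in terms of $\bar G^i_{\bar\alpha}$. The second condition $s_{ij}=0$ gives $s^i_0=s_0=0$, so (\ref{GG01}) together with the expressions (\ref{Q02}) for $\Theta$ and $\Psi$ collapses to (\ref{GGG1}) (which, via (\ref{GG01}) and (\ref{Q02}), holds for every $q$ once $s_{ij}=0$), expressing $G^i$ as $G^i_\alpha$ plus a $y^i$-term plus precisely the $b^i$-term appearing in the first condition. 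Subtracting (\ref{GG11}) from (\ref{GGG1}) and using the first condition to cancel $G^i_\alpha-\bar G^i_{\bar\alpha}$ together with that $b^i$-term, all terms proportional to $\bar s^i$ or $\bar b^i$ drop out and one is left with $G^i-\bar G^i=Py^i$, where $P=P(x,y)$ is an explicit positively $1$-homogeneous scalar on $TM_0$ (namely $\theta$ plus the coefficients of $y^i$ left over from (\ref{GGG1}) and (\ref{GG11})). Hence $G^i=\bar G^i+Py^i$, i.e. $F$ is projectively equivalent to $\bar F$; this is the same ``conversely'' computation as in the proof of Theorem \ref{mainthm1}.

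Since the corollary is essentially a repackaging of Theorem \ref{mainthm1} together with Lemmas \ref{lem1} and \ref{lemma3}, I do not expect a genuine obstacle. The point to be careful about is that ``$s_{ij}=0$'' is strictly weaker than ``$F$ is a Douglas metric'', so the sufficiency cannot be obtained by simply checking the Douglas condition for $F$ and quoting Theorem \ref{mainthm1} backwards; it has to go through the explicit spray computation above, in which the first condition is exactly what makes the leftover terms assemble into $Py^i$ (and, when $q=-1$, forces $r_{00}=0$, so that (\ref{GGG1}) degenerates to $G^i=G^i_\alpha$). A minor technical issue is the passage from the open dense set $\{b\neq0\}$ to all of $M$ in the necessity, which is handled by continuity.
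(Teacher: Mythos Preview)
Your proof is correct and follows the paper's own approach: the paper states the corollary as an immediate consequence of Lemma~\ref{lem1}, Lemma~\ref{lemma3}, and Theorem~\ref{mainthm1}, and your argument makes this derivation explicit. Your observation that the sufficiency cannot be obtained by literally quoting Theorem~\ref{mainthm1} (since $s_{ij}=0$ is weaker than $F$ being Douglas) and must instead repeat the spray computation from the ``Conversely'' part of that proof is exactly right, and is implicitly what the paper does as well; your uniform use of (\ref{GGG1}) for all $q\neq1$ via (\ref{GG01}) and (\ref{Q02}) is slightly cleaner than the paper's case split at $q=-1$.
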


\bigskip

It is well known that the Berwald metric $F=\frac{(\alpha+\beta)^2}{\alpha}$ on a manifold $M$ is a
Douglas metric if and only if
\begin{eqnarray}
b_{i|j}=2\tau [(1+2b^2)a_{ij}-3b_ib_j],\label{010}
\end{eqnarray}
where $\tau=\tau(x)$ is a scalar function on $M$. Thus by (\ref{010})  and  Theorem \ref{mainthm1}, we have the following.
\begin{cor}
Let $F=\frac{(\alpha+\beta)^2}{\alpha}$  be a  Berwald  metric and $\bar{F}=\frac{\bar{\alpha^2}}{\bar{\beta}}$ be a Kropina metric on a $n$-dimensional
manifold $M$ $(n \geq3)$ where $\alpha$ and $\bar{\alpha}$ are two Riemannian metrics, $\beta$ and $\bar{\beta}$ are two non-zero collinear 1-forms. Then $F$ is projectively related to $\bar{F}$ if and only if they are Douglas metrics and the following holds
\begin{eqnarray*}
G^i_{\alpha}-\bar{G}^i_{\bar{\alpha}}=\frac{1}{2\bar{b}^2}(\bar{\alpha}^2\bar{s}^i+\bar{r}_{00}\bar{b}^i)+\theta y^i-2\tau\alpha^2 b^i.
\end{eqnarray*}
\end{cor}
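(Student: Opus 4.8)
The plan is to derive this as a direct specialization of Theorem~\ref{mainthm1} to $q=2$, combined with the known Douglas-metric characterization (\ref{010}) of the Berwald metric. First I would note that $F=\frac{(\alpha+\beta)^2}{\alpha}$ is exactly the $(q,\alpha,\beta)$-metric $\frac{(\alpha+\beta)^q}{\alpha^{q-1}}$ at $q=2$, and $q=2\neq 1$, so all hypotheses of Theorem~\ref{mainthm1} are met (dimension $n\geq 3$, $\beta,\bar\beta$ nonzero and collinear). Hence $F$ is projectively related to $\bar F$ if and only if both are Douglas metrics and relation (\ref{DD11}) holds with $q=2$. Substituting $q=2$ there — using $1-q^2=-3$, $2-q=0$, $1+(q^2-q)b^2=1+2b^2$, and $-\tfrac12 q(q-1)=-1$ — collapses the first term of (\ref{DD11}) to
\[
-\frac{\alpha^2 r_{00}}{(1+2b^2)\alpha^2-3\beta^2}\,b^i .
\]

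Next I would bring in (\ref{010}): the Berwald metric $F$ is a Douglas metric if and only if $b_{i|j}=2\tau[(1+2b^2)a_{ij}-3b_ib_j]$ for some scalar $\tau=\tau(x)$ on $M$. Since that right-hand side is symmetric in $i,j$, it gives $s_{ij}=0$ and $r_{ij}=2\tau[(1+2b^2)a_{ij}-3b_ib_j]$, and contracting with $y^iy^j$ yields $r_{00}=2\tau\big[(1+2b^2)\alpha^2-3\beta^2\big]$. The crucial observation is that $r_{00}$ is now precisely $2\tau$ times the denominator in the display above, so the quotient cancels and that term becomes $-2\tau\alpha^2 b^i$. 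Plugging this back turns the $q=2$ form of (\ref{DD11}) into exactly the identity asserted in the corollary.

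Finally I would close the equivalence in both directions. For necessity: if $F$ is projectively related to $\bar F$, then by Theorem~\ref{mainthm1} both are Douglas metrics, so (\ref{010}) holds for $F$ and the cancellation above converts the $q=2$ case of (\ref{DD11}) into the stated relation. For sufficiency: assume $F,\bar F$ are Douglas metrics and the stated relation holds; then (\ref{010}) holds for $F$, and running the computation in reverse recovers (\ref{DD11}) with $q=2$, whence Theorem~\ref{mainthm1} gives projective relatedness. There is no genuine obstacle here — the statement is a corollary — and the only points needing a little care are verifying that $q=2$ lies inside the scope of Theorem~\ref{mainthm1}, tracking the constant $-\tfrac12 q(q-1)$, and recognizing the polynomial identity $r_{00}=2\tau[(1+2b^2)\alpha^2-3\beta^2]$ that makes the rational term simplify to $-2\tau\alpha^2 b^i$.
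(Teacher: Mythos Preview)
Your proposal is correct and follows precisely the route the paper indicates: the corollary is obtained by specializing Theorem~\ref{mainthm1} to $q=2$ and invoking the Douglas characterization~(\ref{010}) of the Berwald metric, which yields $r_{00}=2\tau\big[(1+2b^2)\alpha^2-3\beta^2\big]$ and cancels the denominator in~(\ref{DD11}) to produce $-2\tau\alpha^2 b^i$. The paper states this in a single line, while you have written out the arithmetic and the two directions of the equivalence explicitly; there is no difference in method.
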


\section{Proof of Theorem \ref{mainthm3}}
In this section, we are going to prove the Theorem \ref{mainthm3}. More precisely, we find the conditions that an   $(q,\alpha,\beta)$-metric $F=\frac{\beta^q}{(\beta-\alpha)^{q-1}}$ being projectively equivalent to a Kropina metric. For the $(q,\alpha,\beta)$-metric $F=\frac{\beta^q}{(\beta-\alpha)^{q-1}}$, the following are hold
\begin{eqnarray}
&&\nonumber \phi=\frac{s^p}{(s-1)^{q-1}},\\
&&\nonumber Q=\frac{s-q}{(q-1)s},\\
&&\nonumber \Psi=\frac{q}{2[s^2(s-1)+q(b^2-s^2)]} ,\\
&&\Theta=\frac{s(s-2q)}{2[s^2(s-1)+q(b^2-s^2)]}\label{Q001}.
\end{eqnarray}

\bigskip

First we prove the following.

\begin{lem}\label{lemma2}
Let $F=\frac{\beta^q}{(\beta-\alpha)^{q-1}}$  be an $(q,\alpha,\beta)$-metric  and $\bar{F}=\frac{\bar{\alpha^2}}{\bar{\beta}}$ be a Kropina metric on a $n$-dimensional
manifold $M$ $(n \geq3)$ where $\alpha$ and $\bar{\alpha}$ are two Riemannian metrics and $\beta$ and $\bar{\beta}$ are two non-zero collinear 1-forms. Then  $F$ and $\bar{F}$ have the same Douglas tensor if and only if they are all Douglas metrics.
\end{lem}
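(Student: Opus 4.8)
The plan is to follow the proof of Lemma~\ref{lemma1} step by step, feeding in the data (\ref{Q001}) of $F=\frac{\beta^q}{(\beta-\alpha)^{q-1}}$ in place of the data (\ref{Q02}) used there. Sufficiency is immediate: if $F$ and $\bar F$ are both Douglas metrics then their Douglas tensors both vanish, hence coincide. So all the work is in the converse.

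Assume $D^i_{j\ kl}=\bar D^i_{j\ kl}$, so that (\ref{Q01}) holds, i.e. $T^i-\bar T^i-\lambda(T^m_{y^m}-\bar T^m_{y^m})y^i=H^i_{00}$ for some $H^i_{jk}=H^i_{jk}(x)$. First I would compute $T^i$ and $T^m_{y^m}$ from (\ref{QA})--(\ref{Q3}) using (\ref{Q001}); here $\alpha Q=\frac{\alpha(\beta-q\alpha)}{(q-1)\beta}$, and the denominator common to $\Theta$ and $\Psi$, after substituting $s=\beta/\alpha$ and clearing, is $\beta^3-(1+q)\alpha\beta^2+qb^2\alpha^3$. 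The Kropina side $\bar T^i,\bar T^m_{y^m}$ is obtained from (\ref{Q03}) exactly as in (\ref{G11}). Plugging all of this into (\ref{Q01}) and clearing denominators gives a polynomial identity of the shape
\[
2\bar b^2\bar\beta\, P(\alpha)+(\bar A^i\bar\alpha^2+\bar B^i)\,R(\alpha)=2\bar b^2\bar\beta\, R(\alpha)\,H^i_{00},
\]
where, as in the proof of Lemma~\ref{lemma1}, $\bar A^i=\bar b^2\bar s^i_0-\bar b^i\bar s_0$ and $\bar B^i=\bar\beta[2\lambda y^i(\bar r_0+\bar s_0)-\bar b^i\bar r_{00}]$, while $P(\alpha)$ and $R(\alpha)$ are polynomials in $\alpha$ with coefficients polynomial in $\beta$ (and in $b^2,q$), $R(\alpha)$ being the cleared common denominator coming from $F$; their coefficients are read off from (\ref{Q001}) just as the $A^i,\dots,H^i$ and $I,\dots,N$ were in the first case.

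Then I would split this identity into its even and odd parts in $\alpha$ through the substitution $y^i\mapsto -y^i$ (noting that $\alpha$ and $\bar\alpha$ are even in $y$, while $\beta$, $\bar\beta$, $y^i$ are odd), producing two polynomial identities analogous to (\ref{Q08}) and (\ref{Q09}). Using that $\alpha$ and $\bar\alpha$ are irrational over the polynomial ring in $y$ and that $\bar\beta$ is a prime linear form coprime to both, these force first that $\beta$ is proportional to $\bar\beta$ (consistent with the collinearity hypothesis) and then that $\bar\beta$ divides $\bar A^i=\bar b^2\bar s^i_0-\bar b^i\bar s_0$. Writing $\bar b^2\bar s^i_0-\bar b^i\bar s_0=\psi^i\bar\beta$ with $\psi^i=\psi^i(x)$ and contracting with $\bar y_i:=\bar a_{ij}y^j$ yields $\psi^i=-\bar s^i$, hence
\[
\bar s_{ij}=\frac{1}{\bar b^2}(\bar b_i\bar s_j-\bar b_j\bar s_i).
\]
By Lemma~\ref{lem1}, since $n\geq 3$, $\bar F=\frac{\bar\alpha^2}{\bar\beta}$ is then a Douglas metric; and since $F$ has the same Douglas tensor, $F$ is a Douglas metric as well, which is what we want.

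I expect the main obstacle to be the degree-and-parity bookkeeping inside the cleared identity. Because $\alpha Q$ and the denominators of $\Theta,\Psi$ now carry factors of $\beta$ --- rather than being polynomials in $\alpha$ with a non-vanishing $\alpha^0$-coefficient, as happens for $\frac{(\alpha+\beta)^q}{\alpha^{q-1}}$ --- the polynomial $R(\alpha)$ is genuinely different, and one must keep careful track of which powers of $\beta$ (equivalently $\bar\beta$) can be factored out of $R(\alpha)$, $P(\alpha)$ and $H^i_{00}$ and check that after cancelling the common $\bar\beta$-factors the primality argument still delivers $\bar\beta\mid\bar A^i$. As in Lemma~\ref{lemma1} I also expect the excluded value $q=1$, and possibly $q=-1$, to make some top-degree coefficients degenerate, so these should be isolated; the case $q=-1$ can be treated much as the $q=-1$ case of Lemma~\ref{lemma1}, by discarding the vanishing leading terms before carrying out the even/odd split.
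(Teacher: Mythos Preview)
Your overall architecture is right and matches the paper: clear denominators in (\ref{Q01}) using (\ref{Q001}) and (\ref{Q03}), reduce to showing $\bar\beta\mid \bar A^i=\bar b^2\bar s^i_0-\bar b^i\bar s_0$, then finish via the contraction and Lemma~\ref{lem1}. The gap is in the middle step, precisely where you flag ``degree-and-parity bookkeeping'' as the obstacle: for this family the naive even/odd split does \emph{not} deliver $\bar\beta\mid\bar A^i$.

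Concretely, writing the cleared denominator as $R(\alpha)=\sum_{j=0}^6 B_j\alpha^j$, one finds (cf.\ (\ref{Q10})) that \emph{every} $B_j$ carries at least one factor of $\beta$; by collinearity $\beta=\mu\bar\beta$, so $\bar\beta\mid R_{\mathrm{even}}$ and $\bar\beta\mid R_{\mathrm{odd}}$ automatically, and the two congruences $\bar\beta\mid \bar A^i\bar\alpha^2 R_{\mathrm{even}}$, $\bar\beta\mid \bar A^i\bar\alpha^2 R_{\mathrm{odd}}$ are vacuous. If instead you first divide $R_{\mathrm{even}}$ by its common $\bar\beta$, the corresponding $A$-side (the $\sum A^i_{2j}\alpha^{2j}$ term) no longer carries a $\bar\beta$, so you cannot isolate $\bar A^i$. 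The paper overcomes this with an extra algebraic move you do not mention: it forms the combination $\tfrac{\beta}{q}\times(\text{even part})+\alpha\times(\text{odd part})$, checks that the $\alpha^8$-coefficient $\tfrac{\beta}{q}A^i_8+A^i_7$ \emph{vanishes identically} and that every surviving coefficient on both sides is divisible by $\beta^2$, and then divides through by $\beta^2$ to obtain a new identity (\ref{Q108}) whose denominator-side leading coefficient is $C_6=2q(q-1)b^4$, a nonzero constant in $y$. Only this identity, paired with (\ref{Q104}), makes the primality argument go through to yield $\bar\beta\mid\bar A^i$.

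So your plan needs one more ingredient: after the even/odd split, take the specific linear combination above, verify the $\beta^2$-divisibility of all resulting coefficients, and use the constant $C_6$ to run the coprimality argument. Your remark about handling $q=-1$ separately is unnecessary here; the paper's manipulation works uniformly (note $C_6\neq 0$ for $q\neq 0,1$), and indeed Theorem~\ref{mainthm3} already excludes $q=\pm 1$.
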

\begin{proof}
The sufficiency is obvious. Suppose that $F$ and $\bar{F}$ have the same Douglas
tensor on an n-dimensional manifold $M$ when $n\geq 3$. Then ($\ref{Q01}$) holds. By plugging
(\ref{Q03}) and (\ref{Q001}) into (\ref{Q01}), we obtain
\begin{eqnarray}
\frac{\sum^8_{j=1}A^i_j\alpha^j}{\sum^6_{j=0}B_j\alpha^j}+\frac{\bar{A}^i\bar{\alpha}^2+\bar{B}^i}{2\bar{b}^2\bar{\beta}}=H^i_{00},\label{Q101}
\end{eqnarray}
where
\begin{eqnarray}
\nonumber A^i_1\!\!\!\!&=&\!\!\!\!\!\ 2\beta^7s^i_0-3\lambda q(q-1)\beta^5r_{00}y^i\\
\nonumber A^i_2\!\!\!\!&=&\!\!\!\!\!\ 6\lambda\beta^5s_0y^i-2(3q+2)\beta^6s^i_0+2q(q^2-1)\lambda\beta^4r_{00}y^i\\
\nonumber A^i_3\!\!\!\!&=&\!\!\!\!\!\ 2[(q+1)^2+2q(q+1)]\beta^5s^i_0-2q(6q+1)\lambda \beta^4 s_0y^i+q(q-1)\beta^4r_{00}b^i\\
\nonumber &&+q(q-1)[3\lambda b^2\beta^3r_{00}y^i-2\lambda\beta^4 r_0 y^i],\\
\nonumber A^i_4\!\!\!\!&=&\!\!\!\!\!\ [2q(q+1)(3q-1)-6qb^2]\lambda\beta^3 s_0 y^i+2q(q^2-1)\lambda\beta^2[\beta \nonumber r_0-b^2r_{00}]y^i\\
\nonumber &&-2q\beta^4[(q+1)^2-2b^2]s^i_0-q\beta^3[2\beta s_0+(q^2-1)r_{00}]b^i,\\
\nonumber A^i_5\!\!\!\!&=&\!\!\!\!\!\ 2q(5q+2)\lambda b^2\beta^2 s_0y^i-2q(2q+1)\beta^3[2b^2s^i_0-s_0b^i],\\
\nonumber A^i_6\!\!\!\!&=&\!\!\!\!\!\ 2q^2(q+1)\beta^2[2b^2s^i_0-s_0b^i]-2q^2(3q+1)\lambda b^2\beta s_0y^i\\
&&\nonumber-q^2(q-1)\lambda b^2\beta[2r_0y^i-r_{00}b^i],\\
A^i_7\!\!\!\!&=&\!\!\!\!\!\ 2q^2b^2\beta(b^2s^i_0-s_0b^i),\label{Qw}\\
A^i_8\!\!\!\!&=&\!\!\!\!\!\ -2q^3b^2(b^2s^i_0-s_0b^i).\label{Q9}
\end{eqnarray}
and
\begin{eqnarray}
\nonumber B_0\!\!\!\!&=&\!\!\!\!\!\ 2(q-1)\beta^7\\
\nonumber B_1\!\!\!\!&=&\!\!\!\!\!\ -4(q-1)(q+1)\beta^6\\
\nonumber B_2\!\!\!\!&=&\!\!\!\!\!\ 2(q-1)(q+1)^2\beta^5\\
\nonumber B_3\!\!\!\!&=&\!\!\!\!\!\ 4q(q-1)b^2\beta^4\\
\nonumber B_4\!\!\!\!&=&\!\!\!\!\!\ -4q(q^2-1)b^2\beta^3\\
\nonumber B_5\!\!\!\!&=&\!\!\!\!\!\ 0\\
B_6\!\!\!\!&=&\!\!\!\!\!\ 2q^2(q-1)b^4\beta,\label{Q10}
\end{eqnarray}
and
\begin{eqnarray*}
\bar{A}^i\!\!\!\!&=&\!\!\!\!\!\ \bar{b}^2\bar{s}^i_0-\bar{b}^i\bar{s}_0,\\
\bar{B}^i\!\!\!\!&=&\!\!\!\!\!\ \bar{\beta}[2\lambda y^i(\bar{r}_0+\bar{s}_0)-\bar{b}^i\bar{r}_{00}].
\end{eqnarray*}
(\ref{Q101}) is equivalent to
\begin{eqnarray}
(\sum^8_{j=1}A^i_j\alpha^j)(2\bar{b}^2\bar{\beta})+(\bar{A}^i\bar{\alpha}^2+\bar{B}^i)(\sum^6_{j=0}B_j\alpha^j)
=(2\bar{b}^2\bar{\beta})(\sum^6_{j=0}B_j\alpha^j)H^i_{00}.\label{Q102}
\end{eqnarray}
By replacing $y^i$ with $-y^i$ in (\ref{Q102}) we get
\begin{eqnarray}
\nonumber(\sum^3_{j=0}A^i_{(2j+1)}\alpha^{(2j+1)}\!\!\!\!&-&\!\!\!\!\! \sum^4_{j=1}A^i_{(2j)}\alpha^{(2j)})(-2\bar{b}^2\bar{\beta})\\ \nonumber
\!\!\!\!&-&\!\!\!\!\!(\bar{A}^i\bar{\alpha}^2+\bar{B}^i)(\sum^1_{j=0}B_{(2j+1)}\alpha^{(2j+1)}
-\sum^3_{j=0}B_{(2j)}\alpha^{(2j)})=\\
\!\!\!\!&-&\!\!\!\!\!2\bar{b}^2\bar{\beta}(\sum^1_{j=0}B_{(2j+1)}\alpha^{(2j+1)}
-\sum^3_{j=0}B_{(2j)}\alpha^{(2j)})H^i_{00}.\label{Q103}
\end{eqnarray}
$(\ref{Q102})-(\ref{Q103})$ implies that
\begin{eqnarray}
2\bar{b}^2\bar{\beta}(\sum^4_{j=1}A^i_{(2j)}\alpha^{(2j)})+(\bar{A}^i\bar{\alpha}^2+\bar{B}^i)
\sum^3_{j=0}B_{(2j)}\alpha^{(2j)}
=2\bar{b}^2\bar{\beta}\sum^3_{j=0}B_{(2j)}\alpha^{(2j)}H^i_{00}.\label{Q104}
\end{eqnarray}
$(\ref{Q102})+(\ref{Q103})$ yields
\begin{eqnarray}
\nonumber2\bar{b}^2\bar{\beta}\sum^3_{j=0}A^i_{(2j+1)}\alpha^{(2j+1)}\!\!\!\!&+&\!\!\!\!\!(\bar{A}^i\bar{\alpha}^2+\bar{B}^i)
\sum^1_{j=0}B_{(2j+1)}\alpha^{(2j+1)}\\
 \!\!\!\!&=&\!\!\!\!\! \ 2\bar{b}^2\bar{\beta}\sum^1_{j=0}B_{(2j+1)}\alpha^{(2j+1)}H^i_{00}.\label{Q105}
\end{eqnarray}
By $[\frac{\beta}{q}\times (\ref{Q104})]+[(\ref{Q105})\times\alpha]$ we have
\begin{eqnarray}
&&\nonumber(\bar{A}^i\bar{\alpha}^2+\bar{B}^i)\big[(\frac{\beta}{q})(B_6\alpha^6)+(\frac{\beta}{q}B_4+B_3)\alpha^4
+(\frac{\beta}{q}B_2+B_1)\alpha^2+\frac{\beta}{q}B_0\big]\\
&&+\nonumber(2\bar{b}^2\bar{\beta})\big[(\frac{\beta}{q}A^i_6+A^i_5)\alpha^6+(\frac{\beta}{q}A^i_4+A^i_3)\alpha^4
+(\frac{\beta}{q}A^i_2+A^i_1)\alpha^2\big]=\\
&&(2\bar{b}^2\bar{\beta})H^i_{00}\big[(\frac{\beta}{q})(B_6\alpha^6)+(\frac{\beta}{q}B_4+B_3)\alpha^4
+(\frac{\beta}{q}B_2+B_1)\alpha^2+\frac{\beta}{q}B_0\big].\label{Q107}
\end{eqnarray}
All of member of set $\{(\frac{\beta}{q}A^i_{j}+A^i_{(j-1)}),(\frac{\beta}{q}B_{k}+B_{(k-1)}),\frac{\beta}{q}B_6, \frac{\beta}{q}B_0\hspace{.2cm} j=6,4,2,\hspace{.2cm} k=4,2 \}$ have the factor $\beta^2$. Let us put
\begin{eqnarray}
\nonumber D^i_{j}\!\!\!\!&:=&\!\!\!\!\!\ \frac{1}{\beta^2}(\frac{\beta}{q}A^i_{j}+A^i_{(j-1)}), \ \ \ j=6,4,2\\
\nonumber C_{k}\!\!\!\!&:=&\!\!\!\!\!\ \frac{1}{\beta^2}(\frac{\beta}{q}B_{k}+B_{(k-1)}), \ \ \ k=4,2\\
\nonumber C_{6}\!\!\!\!&:=&\!\!\!\!\!\ \frac{1}{q\beta}B_6=2q(q-1)b^4,\\
C_{0}\!\!\!\!&:=&\!\!\!\!\!\ \frac{1}{q\beta}B_0.
\end{eqnarray}
Then $\frac{1}{\beta^2}\times (\ref{Q107})$ yields
\begin{eqnarray}
\nonumber(\bar{A}^i\bar{\alpha}^2+\bar{B}^i)\!\!\!\!&&\!\!\!\!\! \big[C_6\alpha^6+C_4\alpha^4+C_2\alpha^2+C_0\big]+2\bar{b}^2\bar{\beta}\big[D^i_6\alpha^6+D^i_4\alpha^4+D^i_2\alpha^2\big]
\\ \!\!\!\!&&\!\!\!\!\!=2\bar{b}^2\bar{\beta}H^i_{00}\big[C_6\alpha^6+C_4\alpha^4+C_2\alpha^2+C_0\big].\label{Q108}
\end{eqnarray}
By (\ref{Q104}) and (\ref{Q108}), it follows that $(\bar{A}^i\bar{\alpha}^2+\bar{B}^i)(\sum^3_{j=0}B_{(2j)}\alpha^{(2j)})$ and $(\bar{A}^i\bar{\alpha}^2+\bar{B}^i)\big[C_6\alpha^6+C_4\alpha^4+C_2\alpha^2+C_0\big]$ can be divided by $\bar{\beta}$. Thus $\beta=\mu\bar{\beta}$ and $\bar{A}^i\bar{\alpha}^2 C_6 \alpha^4$ can be divided by $\bar{\beta}$. Since $\bar{\beta}$ is prime with respect to $\alpha$ and $\bar{\alpha}$, then $\bar{A}^i:=\bar{b}^2\bar{s}^i_0-\bar{b}^i\bar{s}_0$ can be divided by $\bar{\beta}$. Hence, there is a scaler function $\psi^i(x)$ such that
\begin{eqnarray}
\bar{b}^2\bar{s}^i_0-\bar{b}^i\bar{s}_0=\psi^i\bar{\beta}.\label{Q010}
\end{eqnarray}
Contracting (\ref{Q010}) with $\bar{y}_i:=\bar{a}_{ij}y^j$  yields $\psi^i(x)=-\bar{s}^i$. Then we have
\begin{eqnarray}
\bar{s}_{ij}=\frac{1}{\bar{b}^2}(\bar{b}_i\bar{s}_j-\bar{b}_j\bar{s}_i).\label{OO1}
\end{eqnarray}
Now, suppose that $(n\geq 3)$. Then by Lemma \ref{lem1}, $\bar{F} =\frac{\bar{\alpha}^2}{\bar{\beta}}$ is a Douglas metric. Since $F$ and $\bar{F}$ have the same Douglas tensor, both of them are Douglas metrics.

If $(n=2)$, $\bar{F}=\frac{\bar{\alpha}^2}{\bar{\beta}}$ is a Douglas metric by Lemma \ref{lem1}. Thus $F$ and $\bar{F}$ having  the same Douglas tensor means that they are all Douglas metrics. This completes the
 proof of Lemma $\ref{lemma2}$
\end{proof}

\bigskip

Now, we are in the position to prove Theorem $\ref{mainthm3}$.

\bigskip

\noindent
{\bf Proof of Theorem \ref{mainthm1}:}  First, we proof the necessity. If $F$ is projectively related to $\bar{F}$, then  they have the same Douglas tensor. By Lemma $\ref{lemma1}$, $F$ and $\bar{F}$
are both Douglas metrics. If $q=1,-1$, then it is easy to prove that $\phi(s)=\frac{s^q}{(s-1)^{q-1}}$ satisfies $\frac{Q}{S}\neq constant$. By Lemma $\ref{lemma3}$, it results that $s_{ij}=0$. By ($\ref{G1}$),  we get
\begin{eqnarray}
\nonumber G^i=G^i_{\alpha}\!\!\!\!&+&\!\!\!\!\!\ \frac{1}{2}\frac{\beta(\beta-2q\alpha)r_{00}}{\beta^2(\beta-\alpha)+q(b^2\alpha^2-\beta^2)\alpha} y^i\\
\!\!\!\!&+&\!\!\!\!\!\ \frac{1}{2}\frac{ q\alpha^3 r_{00}}{\beta^2(\beta-\alpha)+q(b^2\alpha^2-\beta^2)\alpha} b^i.\label{G1}
\end{eqnarray}
Plugging ($\ref{OO1}$) and (\ref{Q03}) into ($\ref{GG01}$) yields
\begin{eqnarray}
\bar{G}^i=\bar{G}^i_{\bar{\alpha}}-\frac{1}{2\bar{b}^2}\big[-\bar{\alpha}^2\bar{s}^i+
(2\bar{s}_0y^i-\bar{r}_{00}\bar{b}^i)+2\frac{\bar{r}_{00}\bar{\beta}y^i}{\bar{\alpha}^2}\big].\label{GG0}
\end{eqnarray}
By assumption, there is a scalar function $P = P(x,y)$ on $TM_0$ such that $G^i=\bar{G}^i+Py^i$. Then by (\ref{G1}) and (\ref{GG0})  we have
\begin{eqnarray}
\nonumber\Big[P\!\!\!\!&-&\!\!\!\!\!\ \frac{1}{2}\frac{\beta(\beta-2q\alpha)r_{00}}{\beta^2(\beta-\alpha)+q(b^2\alpha^2-\beta^2)\alpha}
-\frac{1}{2\bar{b}^2}(\bar{s}_0+\bar{r}_{00}\bar{b}^i)\Big]y^i\\
\!\!\!\!&=&\!\!\!\!\!\ G^i_{\alpha}-\bar{G}^i_{\bar{\alpha}}-\frac{1}{2\bar{b}^2}(\bar{\alpha}^2\bar{s}^i+\bar{r}_{00}\bar{b}^i)
+\frac{1}{2}\frac{ q\alpha^3 r_{00}}{\beta^2(\beta-\alpha)+q(b^2\alpha^2-\beta^2)\alpha} b^i.\label{D110}
\end{eqnarray}
The right side of (\ref{D110}) is a quadratic in $y$. Then there exists a 1-form
$\theta=\theta_i(x)y^i$ on $M$ such that
\begin{eqnarray}
P-\frac{1}{2}\frac{\beta(\beta-2q\alpha)r_{00}}{\beta^2(\beta-\alpha)
+q(b^2\alpha^2-\beta^2)\alpha}-\frac{1}{2\bar{b}^2}(\bar{s}_0+\bar{r}_{00}\bar{b}^i)=\theta.
\end{eqnarray}
Thus we get
\begin{eqnarray}
G^i_{\alpha}+\frac{1}{2}\frac{ q\alpha^3 r_{00}}{\beta^2(\beta-\alpha)+q(b^2\alpha^2-\beta^2)\alpha} b^i=\bar{G}^i_{\bar{\alpha}}+\frac{1}{2\bar{b}^2}(\bar{\alpha}^2\bar{s}^i+\bar{r}_{00}\bar{b}^i)+\theta y^i.\label{D1110}
\end{eqnarray}
This completes the proof of the necessity.

Conversely, from (\ref{G1}), (\ref{GG0}) and (\ref{p7}) we have
\begin{eqnarray}
G^i=\bar{G}^i+\big[\theta+\frac{1}{2}\frac{\beta(\beta-2q\alpha)r_{00}}{\beta^2(\beta-\alpha)
+q(b^2\alpha^2-\beta^2)\alpha}+\frac{1}{2\bar{b}^2}(\bar{s}_0+\bar{r}_{00}\bar{b}^i)\big]y^i.
\end{eqnarray}
Thus $F$ is projectively equivalent to $\bar{F}$. This completes the proof.
\qed

\bigskip

By Lemma \ref{lem1}, Lemma \ref{lemma3} and Theorem \ref{mainthm3}, we have the following.
\begin{cor}\label{mainthm4}
Let $F=\frac{\beta^q}{(\beta-\alpha)^{q-1}}$ $(q\neq 1,-1)$  be a $(q,\alpha,\beta)$-metric and $\bar{F}=\frac{\bar{\alpha^2}}{\bar{\beta}}$  be a Kropina metric on a $n$-dimensional
manifold $M$ $(n \geq3)$ where $\alpha$ and $\bar{\alpha}$ are two Riemannian metrics, $\beta$ and $\bar{\beta}$ are two nonzero collinear 1-forms. Then $F$ is projectively related to $\bar{F}$ if and only if  the following holds
\begin{eqnarray}
G^i_{\alpha}-\bar{G}^i_{\bar{\alpha}}\!\!\!\!&=&\!\!\!\!\!\frac{1}{2\bar{b}^2}(\bar{\alpha}^2\bar{s}^i+\bar{r}_{00}\bar{b}^i)+\theta y^i-\frac{1}{2}\frac{ q\alpha^3 r_{00}}{\beta^2(\beta-\alpha)+q(b^2\alpha^2-\beta^2)\alpha} b^i,\\
s_{ij}\!\!\!\!&=&\!\!\!\!\!0,\\
\bar{s}_{ij}\!\!\!\!&=&\!\!\!\!\! \frac{1}{\bar{b}^2}\{\bar{b}_i\bar{s}_j-\bar{b}_j\bar{s}_i\}.
\end{eqnarray}
where $b_{i|j}$ denote the coefficients of the covariant derivatives of $\beta$ with respect to $\alpha$.
\end{cor}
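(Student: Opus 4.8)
The plan is to read off Corollary \ref{mainthm4} from Theorem \ref{mainthm3} by combining it with the Douglas-metric criteria of Lemma \ref{lem1} and Lemma \ref{lemma3}; no computation beyond what already appears in the proof of Theorem \ref{mainthm3} is needed.

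\textbf{Necessity.} Assume $F$ is projectively related to $\bar F$. Theorem \ref{mainthm3} immediately gives that $F$ and $\bar F$ are Douglas metrics and that the relation (\ref{p7}) holds, which is the first of the three displayed equations. To obtain $s_{ij}=0$ I would invoke Lemma \ref{lemma3}: one has to check its hypothesis $Q/s\neq\text{const}$, and from (\ref{Q001}) this is clear, since $Q/s=\frac{s-q}{(q-1)s^{2}}$ is a non-constant rational function of $s$ for $q\neq 1$. Because $\beta$ is a non-zero $1$-form we have $b=\|\beta_{x}\|_{\alpha}\neq 0$, so Lemma \ref{lemma3} forces $\beta$ to be closed, i.e. $s_{ij}=0$. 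For $\bar F$, being a Douglas metric on $M^{n}$ with $n\geq 3$ and $\bar b^{2}\neq 0$, Lemma \ref{lem1}(1) yields exactly $\bar s_{ij}=\frac{1}{\bar b^{2}}(\bar b_{i}\bar s_{j}-\bar b_{j}\bar s_{i})$, the third equation.

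\textbf{Sufficiency.} Conversely, suppose the three equations hold. From $s_{ij}=0$ we get $s^{i}_{0}=s_{0}=0$, so substituting the values of $\Theta,\Psi$ from (\ref{Q001}) into (\ref{GG01}) reduces the spray coefficients of $F$ to the expression (\ref{G1}); likewise, substituting $\bar s_{ij}=\frac{1}{\bar b^{2}}(\bar b_{i}\bar s_{j}-\bar b_{j}\bar s_{i})$ together with (\ref{Q03}) into (\ref{GG01}) for $\bar F$ gives (\ref{GG0}). Subtracting these two expressions and feeding in the first displayed equation (which is precisely (\ref{p7})) makes the $b^{i}$-terms cancel, leaving $G^{i}-\bar G^{i}=Py^{i}$ for a scalar function $P=P(x,y)$ on $TM_{0}$ (the same $P$ that appears in the converse part of the proof of Theorem \ref{mainthm3}). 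Hence $F$ is projectively related to $\bar F$.

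The only genuine checkpoint — and the spot where a careless argument could slip — is the verification that $Q/s$ is non-constant for $\phi(s)=s^{q}/(s-1)^{q-1}$, together with keeping the standing hypotheses $n\geq 3$, $b\neq 0$ and $\bar b^{2}\neq 0$ visible so that Lemma \ref{lemma3} and Lemma \ref{lem1}(1) genuinely apply; everything else is a direct unpacking of Theorem \ref{mainthm3}.
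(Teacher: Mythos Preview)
Your proposal is correct and follows exactly the route the paper intends: the paper itself offers no detailed proof of this corollary, merely citing Lemma \ref{lem1}, Lemma \ref{lemma3} and Theorem \ref{mainthm3}, and your argument is the natural unpacking of that citation. Your handling of sufficiency---computing $G^i$ and $\bar G^i$ directly from (\ref{GG01}) via $s_{ij}=0$ and the $\bar s_{ij}$ relation, then subtracting---mirrors the converse portion of the proof of Theorem \ref{mainthm3} and cleanly sidesteps having to separately verify that $F$ is Douglas.
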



\bigskip

\noindent
Akbar Tayebi and Hassan Sadeghi\\
Department of Mathematics, Faculty  of Science\\
University of Qom \\
Qom. Iran\\
Email:\ akbar.tayebi@gmail.com

\bigskip

\noindent
Esmaeil Peyghan\\
Department of Mathematics, Faculty  of Science\\
Arak University\\
Arak 38156-8-8349,  Iran\\
Email: epeyghan@gmail.com
\end{document}